\newtheorem{thm}{Theorem}[section]
\newtheorem{lem}{Lemma}[section]
\newtheorem{rem}{Remark}[section]
\newcommand{\be}{\begin{eqnarray}}
\newcommand{\ee}{\end{eqnarray}}
\newcommand{\mr}{\mathbb{R}}
\newcommand{\bes}{\begin{eqnarray*}}
\newcommand{\ees}{\end{eqnarray*}}
 \numberwithin{equation}{section}
\begin{document}
\begin{titlepage}
\title{\bf LOCAL EXISTENCE OF STRONG SOLUTIONS TO THE $k-\varepsilon$ MODEL EQUATIONS FOR TURBULENT FLOWS}
\author{Baoquan Yuan\thanks{Corresponding Author: B. Yuan}\ and Guoquan Qin
       \\ School of Mathematics and Information Science,
       \\ Henan Polytechnic University,  Henan,  454000,  China.\\
        (bqyuan@hpu.edu.cn, qingguoquan@163.com)
          }
\date{}
\end{titlepage}
\maketitle
\begin{abstract}
 In this paper, we are concerned with the local existence of strong solutions
 to the $k-\varepsilon$ model equations for turbulent flows in a bounded
 domain $\Omega$$\subset$ $\mathbb{R}^{3}$. We prove the existence of unique local
 strong solutions under the assumption that  turbulent kinetic energy and the initial density
 both have lower bounds away from zero.
 \vskip0.1in
\noindent{\bf MSC(2000):} 35Q35, 76F60, 76N10.

\end{abstract}

\vspace{.2in} {\bf Key words:}\quad $k-\varepsilon$ model equations,
strong solutions, local well-posedness.



\section{Introduction}
\setcounter{equation}{0}

Turbulence is a natural phenomenon which occurs inevitably when the Reynolds number
of flows becomes high enough($10^{6}$ or more).
In this paper, we consider the $k-\varepsilon$ model equations \cite{B-G, L-S} for turbulent flows in a  bounded
 domain $\Omega$$\subset$ $\mathbb{R}^{3}$ with smooth boundary,
\begin{eqnarray}
&&\rho_{t}+\nabla\cdot(\rho u)=0,\label{1.1}\\
&&(\rho u)_{t}+\nabla\cdot(\rho u\otimes u)-\Delta u-\nabla (\nabla\cdot u)+\nabla p=-\frac{2}{3}\nabla(\rho k),\label{1.2}\\
&&(\rho h)_{t}+\nabla\cdot(\rho uh)-\Delta h=p_{t}+u\cdot\nabla p+S_{k},\label{1.3}\\
&&(\rho k)_{t}+\nabla\cdot(\rho uk)-\Delta k=G-\rho \varepsilon,\label{1.4}\\
&&(\rho \varepsilon)_{t}+\nabla\cdot(\rho u\varepsilon)-\Delta \varepsilon=
\frac{C_{1}G\varepsilon}{k}-\frac{C_{2}\rho\varepsilon^{2}}{k},\label{1.5}\\
&&(\rho,u,h,k,\varepsilon)(x,0)=(\rho_{0}(x),u_{0}(x),h_{0}(x),k_{0}(x),\varepsilon_{0}(x)),\label{1.6}\\
&&\bigg(u\cdot \overrightarrow{n},h,
\frac{\partial k}{\partial\overrightarrow{n}},\frac{\partial \varepsilon}{\partial\overrightarrow{n}}\bigg)|_{\partial\Omega}=(0,0, 0, 0),\label{boundary}
\end{eqnarray}

with
\begin{eqnarray}
&&S_{k}=\bigg[\mu\bigg(\frac{\partial u^{i}}{\partial x_{j}}+\frac{\partial u^{j}}
{\partial x_{i}}\bigg)-\frac{2}{3}\delta_{ij}\frac{\partial u^{k}}
{\partial x_{k}}\bigg]\frac{\partial u^{i}}{\partial x_{j}}
+\frac{\mu_{t}}{\rho^{2}}\frac{\partial p}
{\partial x_{j}}\frac{\partial \rho}{\partial x_{j}},\label{1.7}
\end{eqnarray}
\begin{eqnarray}
&&G=\frac{\partial u^{i}}{\partial x_{j}}\bigg[\mu_{e}\bigg(\frac{\partial u^{i}}
{\partial x_{j}}+\frac{\partial u^{j}}{\partial x_{i}}\bigg)-\frac{2}{3}\delta_{ij}\bigg(\rho k+
\mu_{e}\frac{\partial u^{k}}{\partial x_{k}}\bigg)\bigg],\label{1.8}\\
&&p=\rho^{\gamma},\label{1.9}
\end{eqnarray}
where $\delta_{ij}=0$ if $i\neq j$, $\delta_{ij}=1$ if $i=j$, and $\mu,$ $\mu_{t},$ $\mu_{e},$ $C_{1}$ and $C_{2}$
are five positive constants satisfying $\mu+\mu_{t}=\mu_{e},$ and $\overrightarrow{n}$ is the unit outward normal
to $\partial\Omega$.

The equations (\ref{1.1})-(\ref{1.9}) are derived from combining the effect
of turbulence on time-averaged Navier-Stokes equations with the $k-\varepsilon$\
model equations. The unknown functions $\rho,$ $u,$ $h,$ $k$  and $\varepsilon$ denote
the density, velocity, total enthalpy, turbulent kinetic energy and the rate of viscous
dissipation of turbulent flows, respectively. The expression of the pressure $p$ has been
simplified here, which indeed has  no bad effect on our study.

In partial differential equations, $k-\varepsilon$ equations belong
to the compressible ones. In this regard, we will refer to the
classical compressible  Navier-Stokes equations and compressible
MHD equations, which are also research mainstreams, to carry out our
study.

For compressible isentropic Navier-Stokes equations, the first
question provoking our interest
is the existence of the weak solutions. P. L. Lions \cite{Li1, Li2} proved
the global existence of weak solutions under the condition that $\gamma>\frac{3n}{n+2}$,
where $\gamma$ is the same as in (\ref{1.9}) and $n$ is the dimension of space.
Later, Feireisl \cite{F1, F2} improved his result to $\gamma>\frac{n}{2}$.
The condition satisfied by $\gamma$ is to prove the existence of renormalized
solutions, which was introduced by DiPerna and Lions \cite{D-L}.
When the initial data are general small perturbations of non-vacuum resting state,
Hoff \cite{H2} proved the global existence of weak solutions provided $\gamma>1$.
The existence of strong solutions is another problem provoking our interest
in the research of Navier-Stokes equations. It has been proved that the density will be away from
vacuum at least  in a small time  provided the initial density
is positive. If the initial data have better regularity, the compressible isentropic
Navier-Stokes equations will admit  unique  local strong solutions under
various boundary conditions \cite{C-C-K, C-K1, C-K2, S-S}. However, when initial
vacuum is allowed, it was shown recently in \cite{C-C-K} that the isentropic ones will have
local strong solutions in the case  that some  compatibility
conditions are satisfied initially. H. J. Choe and H. S. Kim \cite{C-K3} obtained
the unique local strong solutions for full compressible polytropic Navier-Stokes
equations under the similar condition as in
\cite{C-C-K}. In  \cite{C-K3},  the technic the authors used is mainly the
standard iteration argument and the key point of their success is the estimate
for the $L^{2}$ norm of the gradient of pressure. In the process
of studying the condition of  local solutions becoming global
ones, Z. P. Xin \cite{X} proved that the smooth solutions will blow up in finite
time when initial vacuum is allowed.

As for compressible MHD equations, the research directions, which mainly
contain first the existence of weak and strong solutions and second the condition
of weak solutions becoming strong or even classical ones and the
local becoming  global ones, are similar
to that of Navier-Stokes equations. For example, Hu and Wang \cite{H-W1, H-W2, H-W3} obtained
 the local existence of weak solutions to the compressible isentropic MHD equations.
Rozanova \cite{R} proved the local existence of classical solutions
to the compressible barotropic MHD equations provided both
the mass and energy are finite. J. S. Fan and W. H. Yu in \cite{F-Y} proved
the existence and uniqueness of strong solutions to the full
compressible MHD equations. The method used by J. S. Fan and W. H. Yu \cite{F-Y}
 is similar to that in \cite{C-K3}, for example, they are both dependent
 on the standard iteration argument and the estimate for
 the $L^{2}$ norm of the gradient of pressure.

 Under the the hypothesis of the existence of local-in-time smooth solution, the authors of \cite{B-G}
 prove the existence of small data smooth solution in $\mr^3$.
 In this paper, we consider the local-in-time existence of strong solutions to the
 $k-\varepsilon$ model equations (\ref{1.1})-(\ref{1.9})  in a bounded domain $\Omega\subset \mathbb{R}^{3}$.
 Our method is similar to that in \cite{F-Y} and \cite{C-K3}.
 However, in the process of applying the method to  $k-\varepsilon$ model equations,
 we find that the regularity of the solutions should be higher, which is induced
 by higher nonlinearity than compressible Navier-Stokes equations and compressible
 MHD equations,  than that in  \cite{F-Y} and \cite{C-K3}.
In fact, when we make the difference of the $n-th$
and the $(n+1)-th$ of equation (\ref{2.3}) and integrating the result,  it inevitably comes out the term
$\int\partial_{j}\overline{\rho}^{n+1}\partial_{j}\rho^{n+1}\cdot\overline{h}^{n+1}$.
Therefore, we have to use integration by parts,
which leads to two terms as $\int\overline{\rho}^{n+1}\partial_{j}\partial_{j}{\rho}^{n+1}\cdot\overline{h}^{n+1}$
 and $\int\overline{\rho}^{n+1}\partial_{j}{\rho}^{n+1}\cdot\partial_{j}\overline{h}^{n+1}$.
 Then, by H\"{o}lder and Young's inequalities, it turns out that $\|\nabla^{2}\rho^{n+1}\|_{L^{3}}$
 and $\|\nabla\rho^{n+1}\|_{L^{\infty}}$ should be bounded. Thus, we need $\|\rho\|_{H^{3}}$ be bounded
 for a priori estimates. Therefore, from the mass equation enough regularity of the velocity field should be imposed. Moreover,
due to the strong-coupling property of $k-\varepsilon$ equations, we need
corresponding high regularity of unknown functions $k$ and $\varepsilon$.

In a word, the high nonlinearity of $k-\varepsilon$ equations leads to the
necessity of high regularity of some unknown functions and thus leads to much difficulties
 for the a priori estimates. Besides, physically, when the turbulent kinetic energy $k$ vanishes, the turbulence will disappear and the
 $k-\varepsilon$ model equations will degenerate  into the Navier-Stokes equations, therefore, without loss of generality, we
  assume throughout this paper that the turbulent kinetic energy $k$ has a positive lower bound away from zero
 , namely, $0<m<k$ with $m$ a constant.

 To conclude this introduction, we give the outline of the rest of this paper: In section 2, we consider a linearized problem
of the $k-\varepsilon$ equations and derive some local-in-time estimates
for the solutions of the linearized problem. In section 3, we prove
the existence theorem of the local strong solutions of the original nonlinear
problem.


\section{A priori estimates for a linearized problem }
\setcounter{equation}{0}

Using density equation (\ref{1.1}),
we could change  (\ref{1.1})-(\ref{1.9}) into the following equivalent form :
\begin{eqnarray}\label{equivelence}\begin{cases}
\rho_{t}+\nabla\cdot(\rho u)=0,\\
\rho u_{t}+\rho u\cdot\nabla u-\Delta u-\nabla \mbox{div} u+\nabla p=-\frac{2}{3}\nabla(\rho k),\\
\rho h_{t}+\rho u\cdot\nabla h-\Delta h=p_{t}+u\cdot\nabla p+S_{k},\\
\rho k_{t}+\rho u\cdot\nabla k-\Delta k=G-\rho \varepsilon,\\
\rho \varepsilon_{t}+\rho u\cdot\nabla \varepsilon-\Delta \varepsilon=
\frac{C_{1}G\varepsilon}{k}-\frac{C_{2}\rho\varepsilon^{2}}{k},\\
(\rho,u,h,k,\varepsilon)(x,0)=(\rho_{0}(x),u_{0}(x),h_{0}(x),k_{0}(x),\varepsilon_{0}(x)),\\
(u\cdot \overrightarrow{n},h,
\frac{\partial k}{\partial\overrightarrow{n}},\frac{\partial \varepsilon}{\partial\overrightarrow{n}})|_{\partial\Omega}=(0, 0, 0, 0).
\end{cases}
\end{eqnarray}

Then, we consider the following linearized problem of (\ref{equivelence}):
\begin{eqnarray}
&&\rho_{t}+\nabla\cdot(\rho v)=0,\label{2.1}\\
&&\rho u_{t}+\rho v\cdot\nabla u-\Delta u-\nabla \mbox{div} u+\nabla p=-\frac{2}{3}\nabla(\rho \pi),\label{2.2}\\
&&\rho h_{t}+\rho v\cdot\nabla h-\Delta h=p_{t}+u\cdot\nabla p+S_{k}^{'},\label{2.3}\\
&&\rho k_{t}+\rho v\cdot\nabla k-\Delta k=G^{'}-\rho \theta,\label{2.4}\\
&&\rho \varepsilon_{t}+\rho v\cdot\nabla \varepsilon-\Delta \varepsilon=
\frac{C_{1}G^{'}\theta}{\pi}-\frac{C_{2}\rho\theta^{2}}{\pi},\label{2.5}\\
&&(\rho,v,h,\pi,\theta)(x,0)=(\rho_{0}(x),u_{0}(x),h_{0}(x),k_{0}(x),\varepsilon_{0}(x)),\\
&&\bigg(v\cdot\overrightarrow{n},h,
\frac{\partial \pi}{\partial\overrightarrow{n}},\frac{\partial \theta}{\partial\overrightarrow{n}}\bigg)|_{\partial\Omega}=(0, 0, 0, 0).\label{2.55}
\end{eqnarray}

with
\begin{eqnarray*}
&&S_{k}^{'}=\bigg[\mu\bigg(\frac{\partial v^{i}}{\partial x_{j}}+
\frac{\partial v^{j}}{\partial x_{i}}\bigg)-\frac{2}{3}\delta_{ij}
\frac{\partial v^{k}}{\partial x_{k}}\bigg]\frac{\partial v^{i}}{\partial x_{j}}\
+\frac{\mu_{t}}{\rho^{2}}\frac{\partial p}{\partial x_{j}}\frac{\partial \rho}{\partial x_{j}}, \\
&&G^{'}=\frac{\partial v^{i}}{\partial x_{j}}\bigg[\mu_{e}
\bigg(\frac{\partial v^{i}}{\partial x_{j}}+\frac{\partial v^{j}}
{\partial x_{i}}\bigg)-\frac{2}{3}\delta_{ij}\bigg(\rho \pi+
\mu_{e}\frac{\partial v^{k}}{\partial x_{k}}\bigg)\bigg],
\end{eqnarray*}
where $v,$ $\pi$ and $\theta$ are known quantities on $(0,T_{1})\times\Omega$ with $T_{1}>0$.

Here we also impose the following regularity conditions on the initial data:

\be\label{1} \begin{cases}
 0<m<\rho_{0},\ \rho_{0}\in H^{3}(\Omega),\\
 u_{0}\in  H^{3}(\Omega),\\
 (h_{0}, k_{0}, \varepsilon_{0})\in   H^{2}(\Omega),\\
  \bigg(u_{0}\cdot\overrightarrow{n},h_{0},
\frac{\partial k_{0}}{\partial\overrightarrow{n}},\frac{\partial \varepsilon_{0}}{\partial\overrightarrow{n}}\bigg)\bigg|_{\partial\Omega}=(0, 0, 0, 0),\\
0<m<k_{0}.\\
\end{cases} \ee


For the known quantities $v, \pi, \theta$, we assume that $v(0)=u_{0}, \pi(0)=k_{0}, \theta(0)=\varepsilon_{0}$ and
\be\begin{cases}\label{4}
 \mathop{\sup}_{0\leq t\leq T_{2}}(\|v\|_{H^{1}}
 +\|\pi\|_{H^{1}}+\|\theta\|_{H^{1}})\\
 +\int_{0}^{T_{2}}(\|\pi\|_{H^{3}}^{2}
 +\|v_{t}\|_{H^{1}}^{2}+\|\pi_{t}\|_{H^{1}}^{2}+\|\theta_{t}\|_{H^{1}}^{2})\mbox{d}t\leq c_{1},\\
 \mathop{\sup}_{0\leq t\leq T_{2}}\|v\|_{H^{2}}\leq c_{2},\\
 \mathop{\sup}_{0\leq t\leq T_{2}}\|v\|_{H^{3}}\leq c_{3},\\
  \int_{0}^{T_{2}}\|v\|_{H^{4}}^{2}\mbox{d}t\leq c_{4},\\
   \mathop{\sup}_{0\leq t\leq T_{2}}\|\pi\|_{H^{2}}\leq c_{5},\\
   \mathop{\sup}_{0\leq t\leq T_{2}}\|\theta\|_{H^{2}}\leq c_{6}
   \end{cases}
\ee
for some fixed constants $c_i$ satisfying $1<c_0<c_i(i=1,2,\cdots,6)$ and some time $T_{2}>0.$
Where
\be
\nonumber &&c_{0}=2+\|(\rho_{0}, u_{0})\|_{H^{3}}+\|(h_{0},k_{0},\varepsilon_{0})\|_{H^{2}}.
\ee
And for simplicity, we set another small time $T$ as $T$=min\{$c_{0}^{-6\gamma-16}c_{1}^{-10}c_{2}^{-8}c_{3}^{-8}c_{4}^{-2}
c_{5}^{-2}c_{6}^{-4},$  $T_{1},$ $T_{2}$\} and all of the $T$ in section 2 are defined as this.
\begin{rem}
Here it should be emphasized that throughout this paper, C denotes a
generic positive constant which is only dependent on $m,  \gamma$
and $|\Omega|$, but independent of $c_i\ (i=0,1,2,\cdots,6)$.
\end{rem}

\begin{rem}
From the physical viewpoint, we assume that the turbulent kinetic energy $k$ has a positive lower bound away from zero
 , namely, $0<m<k$ with $m$ a constant. We do not know whether $0<m<k$ holds afterwards if its initial value $k_0>m$.
 \end{rem}


Next, we would like to prove the following local existence theorem of the linearized system (\ref{2.1})-(\ref{2.5}).

\begin{thm}\label{lem6}
There exists a unique strong solution $(\rho,u,h,k,\varepsilon)$ to the linearized
problem (\ref{2.1})-(\ref{2.55}) and (\ref{1}) in $[0,T]$ satisfying the estimates
(\ref{conclusion1}) and (\ref{conclusion2}) as well as the regularity
\begin{eqnarray}
&&\rho \in C(0,T;H^{3}),
\rho_{t} \in C(0,T;H^{1}),
u \in C(0,T; H^{3})\cap L^{2}(0,T; H^{4}),\nonumber\\
&&u_{t}\in L^{2}(0,T;H^{1}),
k\in C(0,T; H^{2})\cap L^{2}(0,T;H^{3}),
k_{t}\in L^{2}(0,T;H^{1}),\nonumber\\
&&\varepsilon\in C(0,T; H^{2}),
\varepsilon_{t}\in L^{2}(0,T;H^{1}),
h\in C(0,T; H^{2}),
h_{t}\in L^{2}(0,T;H^{1}),\nonumber\\
&&(\sqrt{\rho}u_{t},\sqrt{\rho}k_{t},\sqrt{\rho}\varepsilon_{t},\sqrt{\rho}h_{t})\in L^{\infty}(0,T;L^{2}).\nonumber
\end{eqnarray}
\end{thm}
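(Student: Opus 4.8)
The plan is to prove Theorem \ref{lem6} by the standard method for linear parabolic/hyperbolic coupled systems: solve each equation in the right order, derive uniform-in-$n$ a priori bounds (this is really where the estimates \eqref{conclusion1}--\eqref{conclusion2} come from, but here we only need existence and regularity for the \emph{linearized} problem with frozen coefficients $v,\pi,\theta$), and then pass to a limit or invoke a fixed-point/continuation argument. Concretely, I would proceed as follows.

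\textbf{Step 1: the transport equation for $\rho$.} Since $v$ is known with $v\cdot\overrightarrow{n}|_{\partial\Om}=0$ and, by \eqref{4}, $v\in L^2(0,T;H^4)\cap L^\infty(0,T;H^3)$, the linear continuity equation \eqref{2.1} is a transport equation with divergence-bounded velocity field. Using the flow map generated by $v$ (well-defined because $v$ is Lipschitz in $x$), one obtains the unique solution $\rho\in C(0,T;H^3)$, and the lower bound $0<\underline{m}\le \rho$ is propagated from $\rho_0>m$ on a short time by the ODE $\frac{d}{dt}(\rho\circ X)=-(\nabla\cdot v)(\rho\circ X)$, the upper bound similarly. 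Differentiating \eqref{2.1} in time gives $\rho_t=-\nabla\cdot(\rho v)\in C(0,T;H^1)$ directly, and $\nabla\rho\in C(0,T;H^2)$ gives the $\|\nabla\rho\|_{L^\infty}$ and $\|\nabla^2\rho\|_{L^3}$ control flagged in the introduction. The pressure $p=\rho^\gamma$ then inherits $p\in C(0,T;H^3)$, $p_t\in C(0,T;H^1)$ by the chain rule and the lower bound on $\rho$.

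\textbf{Step 2: the parabolic equations for $u$, then $k$, $\varepsilon$, $h$.} With $\rho$, $p$ now known, \eqref{2.2} is a linear parabolic system $\rho u_t-\Delta u-\nabla\,\mathrm{div}\,u=F_u$ with $F_u=-\rho v\cdot\nabla u-\nabla p-\frac23\nabla(\rho\pi)$ and Navier-type boundary condition $u\cdot\overrightarrow{n}|_{\partial\Om}=0$ (supplemented by the natural condition making $-\Delta-\nabla\,\mathrm{div}$ coercive). Galerkin approximation in the eigenbasis of the associated elliptic operator, together with the standard energy estimates — multiply by $u_t$, by $\Delta u$, differentiate in $t$ and multiply by $u_t$ — yields $u\in C(0,T;H^3)\cap L^2(0,T;H^4)$ and $u_t\in L^2(0,T;H^1)$, with $\sqrt\rho u_t\in L^\infty(0,T;L^2)$; the right-hand side is controlled because $\rho,v,\pi$ satisfy \eqref{4} and $\nabla p\in H^2$. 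Next solve \eqref{2.4} for $k$: it is again linear parabolic, $\rho k_t-\Delta k=G'-\rho\theta$, with Neumann condition; here $G'$ depends only on $\nabla v$ and $\rho\pi$, so $G'\in L^2(0,T;H^1)$ and we get $k\in C(0,T;H^2)\cap L^2(0,T;H^3)$, $k_t\in L^2(0,T;H^1)$. Then \eqref{2.5} for $\varepsilon$ (linear parabolic, right-hand side $\frac{C_1 G'\theta}{\pi}-\frac{C_2\rho\theta^2}{\pi}\in L^2 H^?$ using the lower bound $\pi=\pi(0)=k_0>m$ propagated — or rather assumed, since $\pi$ is a \emph{given} function with its own bound in \eqref{4}), giving $\varepsilon\in C(0,T;H^2)$, $\varepsilon_t\in L^2(0,T;H^1)$. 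Finally \eqref{2.3} for $h$ with Dirichlet condition $h|_{\partial\Om}=0$ and source $p_t+u\cdot\nabla p+S_k'$, where now $u$ is already known from Step 2 and $S_k'$ depends on $\nabla v$, $\nabla p$, $\nabla\rho$ only; this yields $h\in C(0,T;H^2)$, $h_t\in L^2(0,T;H^1)$, $\sqrt\rho h_t\in L^\infty(0,T;L^2)$.

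\textbf{Step 3: uniqueness and time-continuity.} Uniqueness is immediate: the difference of two solutions satisfies the same linear system with zero data, and a Gronwall argument on the basic energy $\|\rho^{1/2}(u_1-u_2)\|_{L^2}^2+\cdots$ plus the transport estimate for $\rho_1-\rho_2$ forces the difference to vanish. The stated continuity in time (as opposed to mere $L^\infty_t$) of $u$ in $H^3$, $k,\varepsilon,h$ in $H^2$ follows from the parabolic smoothing and the standard Lions--Magenes argument (weak continuity plus continuity of the norm, the latter from the energy identity), and $\rho\in C(0,T;H^3)$ from the transport structure. I expect the main obstacle to be bookkeeping rather than conceptual: carefully checking that every source term lands in the function space required for the next parabolic solve on the \emph{fixed} interval $[0,T]$ with $T$ chosen as in the smallness prescription — in particular that the $H^4$ estimate for $u$ closes, which needs $\nabla^2\rho\in L^3$ and $\nabla\rho\in L^\infty$, hence genuinely uses $\rho\in H^3$ and therefore $v\in L^2(0,T;H^4)$ from \eqref{4}; this is precisely the ``higher regularity'' point emphasized in the introduction, and getting the dependence on the $c_i$'s and $\gamma$ to be absorbed by the choice of $T$ is the delicate accounting. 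The coupling is only one-directional here (because $v,\pi,\theta$ are frozen), so no genuine fixed point is needed at this stage — that is deferred to Section 3.
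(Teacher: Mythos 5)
Your proposal follows essentially the same route as the paper: solve the transport equation \eqref{2.1} for $\rho$ by the particle-trajectory method (Lemma \ref{lem1}), then treat \eqref{2.2}, \eqref{2.4}, \eqref{2.5} and \eqref{2.3} as decoupled linear parabolic problems in that same order, obtaining the bounds by differentiating in time, testing with the time derivative, and applying elliptic regularity (Lemmas \ref{lem2}--\ref{lem5}), before citing the standard existence theory of Cho--Choe--Kim for the linear solves. The one caveat is that the quantitative estimates \eqref{conclusion1}--\eqref{conclusion2}, with their explicit powers of $c_{0},\dots,c_{6}$ that make the iteration of Section 3 self-consistent, are part of the statement of the theorem and constitute essentially the entire content of the paper's proof; you have correctly identified where they come from but deferred rather than executed that accounting.
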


In the following part, we decompose the proof of Theorem \ref{lem6}  into some lemmas.

\begin{lem}\label{lem1}
There exists a unique strong solution $\rho$ to the  linear transport
problem (\ref{2.1})  and (\ref{1}) such that
\begin{eqnarray}
\rho\geq \frac{m}{e},\
\|\rho\|_{H^{3}(\Omega)}\leq Cc_{0},\  \|\rho_{t}\|_{H^{1}(\Omega)}\leq Cc_{0}c_{2}\label{routH3}
\end{eqnarray}
for $0\leq t\leq T$.
\end{lem}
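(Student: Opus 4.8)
The plan is to treat (\ref{2.1}) as the linear transport equation $\rho_t+v\cdot\nabla\rho+(\nabla\cdot v)\rho=0$ for $\rho$ with the prescribed drift $v$, for which (\ref{4}) supplies far more regularity than is needed, and to read off the three bounds in (\ref{routH3}) from the method of characteristics together with a single $H^{3}$ energy estimate. Existence and uniqueness are routine: since (\ref{4}) gives $v\in L^{\infty}(0,T;H^{3})\hookrightarrow L^{\infty}(0,T;C^{1})$ and $v\cdot\overrightarrow{n}=0$ on $\partial\Omega$, the flow $X(t,x)$ of $v$ is well defined and maps $\overline{\Omega}$ into itself, so a solution is represented explicitly by
\[
\rho(t,X(t,x))=\rho_{0}(x)\exp\!\Big(-\int_{0}^{t}(\nabla\cdot v)(s,X(s,x))\,ds\Big),
\]
which one makes rigorous by mollifying $v$, solving the smooth problems, and passing to the limit using the uniform estimates below; uniqueness follows from an $L^{2}$ estimate on the difference $w=\rho_{1}-\rho_{2}$ of two solutions, integrating the convective term by parts (no boundary contribution, since $v\cdot\overrightarrow{n}=0$) and invoking Gronwall with $w(0)=0$.

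For the pointwise lower bound, the representation formula gives $\rho(t,\cdot)\ge m\exp\!\big(-\int_{0}^{t}\|(\nabla\cdot v)(s)\|_{L^{\infty}}\,ds\big)$ on $[0,T]$. Since $H^{3}(\Omega)\hookrightarrow W^{1,\infty}(\Omega)$ in three dimensions, $\|\nabla\cdot v\|_{L^{\infty}}\le C\|v\|_{H^{3}}$, hence $\int_{0}^{T}\|\nabla\cdot v\|_{L^{\infty}}\,ds\le CTc_{3}$, and also $\le C(Tc_{4})^{1/2}$ via the time-integrated bound in (\ref{4}) and Cauchy--Schwarz. The definition of $T$ carries the factors $c_{3}^{-8}$ and $c_{4}^{-2}$ precisely so that this integral is at most $1$, which yields $\rho\ge m/e$ on $[0,T]$.

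For the $H^{3}$ bound, apply $\partial^{\alpha}$ with $|\alpha|\le 3$ to the equation, test against $\partial^{\alpha}\rho$, and sum over $\alpha$: the top-order convective term contributes $-\tfrac12\int_{\Omega}(\nabla\cdot v)|\partial^{\alpha}\rho|^{2}$ after integration by parts (the boundary term vanishes because $v\cdot\overrightarrow{n}=0$); the commutator $[\partial^{\alpha},v\cdot\nabla]\rho$ is bounded in $L^{2}$ by $C\|v\|_{H^{3}}\|\rho\|_{H^{3}}$ through the usual Moser/Kato--Ponce commutator inequality (again using $H^{3}\hookrightarrow W^{1,\infty}$); and the forcing $\partial^{\alpha}\big((\nabla\cdot v)\rho\big)$ by $C\|v\|_{H^{4}}\|\rho\|_{H^{3}}$ since $H^{3}$ is a Banach algebra. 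This gives $\frac{d}{dt}\|\rho\|_{H^{3}}\le C(\|v\|_{H^{3}}+\|v\|_{H^{4}})\|\rho\|_{H^{3}}$, and since $\int_{0}^{T}(\|v\|_{H^{3}}+\|v\|_{H^{4}})\,ds\le C\big(Tc_{3}+(Tc_{4})^{1/2}\big)=O(1)$ by the choice of $T$, Gronwall yields $\|\rho(t)\|_{H^{3}}\le C\|\rho_{0}\|_{H^{3}}\le Cc_{0}$. The bound on $\rho_{t}$ is then immediate from $\rho_{t}=-v\cdot\nabla\rho-(\nabla\cdot v)\rho$: because $H^{2}$ is a Banach algebra in three dimensions, $\|\rho_{t}\|_{H^{1}}\le C\|v\|_{H^{2}}\|\rho\|_{H^{2}}\le Cc_{0}c_{2}$. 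The asserted time-continuity $\rho\in C(0,T;H^{3})$, $\rho_{t}\in C(0,T;H^{1})$ then follows in the standard way from these estimates and the regularity of $v$ in (\ref{4}).

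I expect the only substantive difficulty to be the bookkeeping. First, one must check that the Gronwall exponential is genuinely $O(1)$ and, more delicately, that the constants stay clean enough to land on exactly $m/e$ and $Cc_{0}$ rather than $m\,e^{-C}$ and $e^{C}c_{0}$; this is precisely what the elaborate negative-power definition of $T$ is engineered to control. Second, at the $H^{3}$ level the forcing term $(\nabla\cdot v)\rho$ costs one derivative on $v$ beyond what (\ref{4}) bounds in the sup-in-time norm, so the Gronwall argument has to be run with the integrated bound $\int_{0}^{T}\|v\|_{H^{4}}^{2}\,dt\le c_{4}$; this is the structural reason $c_{4}$ (and $c_{3}$) enter the formula for $T$.
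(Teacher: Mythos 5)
Your proposal is correct and follows essentially the same route as the paper: the lower bound $\rho\geq m/e$ via the characteristics/representation formula combined with the smallness of $T$ relative to $c_3$, the $H^{3}$ bound via a Gronwall argument on $\frac{\md}{\md t}\|\rho\|_{H^{3}}\leq C\|v\|_{H^{3}}\|\rho\|_{H^{3}}+C\|v\|_{H^{4}}\|\rho\|_{H^{3}}$ using $\int_0^T\|v\|_{H^4}\,\md s\leq (Tc_4)^{1/2}$, and the $\rho_t$ bound read off directly from $\rho_t=-\nabla\cdot(\rho v)$ with the $H^{2}$ algebra property. You supply more detail than the paper on existence/uniqueness (mollification and an $L^{2}$ Gronwall estimate for the difference) and on the commutator estimates, but the substance is identical.
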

\begin{proof}
First, applying the particle trajectory method to equation $(\ref{2.2})$, we easily deduce
\be
\rho\geq\rho_{0}\exp\bigg(-\int_{0}^{T}\|\nabla v\|_{L^{\infty}}\mbox{d}t\bigg)
\geq\rho_{0}\exp(-c_{3}T)\geq\frac{\rho_{0}}{e}\geq\frac{m}{e}\nonumber
\ee
and thus
\be
\frac{1}{\rho}\leq \frac{e}{m}\leq C.\nonumber
\ee
Second, by simple calculation, we have
\be
\frac{\mbox{d}}{\mbox{d}t}\|\rho\|_{H^{3}}\leq C\|v\|_{H^{3}}\|\rho\|_{H^{3}}+C\|\nabla^{4}v\|_{L^{2}},\nonumber
\ee
applying Gronwall and H\"{o}lder's inequalities, one gets
\be
\|\rho\|_{H^{3}}\leq \bigg[\exp\bigg(C\int_{0}^{t}\|v\|_{H^{3}}\mbox{d}t\bigg)\bigg]\bigg(\|\rho_{0}\|_{H^{3}}
+C\int_{0}^{t}\|v\|_{H^{4}}\mbox{d}t\bigg)\leq Cc_{0}\nonumber
\ee
for $0\leq t\leq T$.

Next, from equation (\ref{2.1}), one obtains
\be
\|\rho_{t}\|_{H^{1}}=\|\nabla\cdot(\rho v)\|_{H^{1}}\leq C\|\rho\|_{H^{3}}\|v\|_{H^{2}}\leq Cc_{0}c_{2}\nonumber
\ee
for $0\leq t\leq T$.

Thus, we complete the proof of Lemma \ref{lem1}.
\end{proof}

Next, we estimate the velocity field $u$.
\begin{lem}\label{lem2}
There exists a unique strong solution u to the initial boundary value problem (\ref{2.2}) and (\ref{1})  such that
\begin{eqnarray}
&&\|\sqrt{\rho}u_{t}\|_{L^{2}}^{2}
+\| u\|_{H^{1}}^{2}+\int_{0}^{t}\|\nabla u_{t}\|_{L^{2}}^{2}\mbox{d}s \leq Cc_{0}^{5+2\gamma},\ \|u\|_{H^{2}}\leq Cc_{0}^{\frac{5}{2}+3\gamma}c_{1}^{2},\label{u1}\\
&&\|u\|_{H^{3}}\leq Cc_{0}^{\frac{13}{2}+3\gamma}c_{1}^{4}c_{2}c_{5}, \ \int_{0}^{t}\|u\|_{H^{4}}^{2}\mbox{d}s \leq  Cc_{0}^{9+6\gamma}c_{1}^{5}c_{2}^{2}\label{u2}
\end{eqnarray}
for $0\leq t\leq T$.
\end{lem}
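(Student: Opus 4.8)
The plan is to treat \eqref{2.2} as a linear parabolic system for $u$ with the density $\rho$, the pressure $p=\rho^\gamma$, the transport field $v$, and the turbulence term $-\frac23\nabla(\rho\pi)$ all playing the role of given coefficients/forcing, controlled by Lemma \ref{lem1} and the a priori bounds \eqref{4}. Existence and uniqueness of the strong solution follows from the standard theory for linear parabolic equations with the Lam\'e operator $-\Delta u-\nabla\operatorname{div}u$ (which is elliptic), so the substance of the lemma is the four quantitative estimates. I would establish these by a bootstrapping energy argument, exactly in the order they are listed.

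\emph{Step 1 (basic energy and $\sqrt\rho u_t$).} Multiply \eqref{2.2} by $u_t$ and integrate over $\Omega$; using $u\cdot\vec n|_{\partial\Omega}=0$ one gets $\|\sqrt\rho u_t\|_{L^2}^2+\frac{d}{dt}\big(\frac12\|\nabla u\|_{L^2}^2+\frac12\|\operatorname{div}u\|_{L^2}^2\big)$ on the left and the terms $-\int\rho v\cdot\nabla u\cdot u_t$, $-\int\nabla p\cdot u_t$, $-\frac23\int\nabla(\rho\pi)\cdot u_t$ on the right. Bound the convective term by $\|\sqrt\rho\|_{L^\infty}\|v\|_{L^\infty}\|\nabla u\|_{L^2}\|\sqrt\rho u_t\|_{L^2}$ and absorb $\|\sqrt\rho u_t\|_{L^2}^2$ with Young; for the pressure and turbulence terms integrate by parts in time, i.e. write $\int\nabla p\cdot u_t=\frac{d}{dt}\int\nabla p\cdot u-\int\nabla p_t\cdot u$, and control $p_t=\gamma\rho^{\gamma-1}\rho_t$ via \eqref{routH3}. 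Then differentiate \eqref{2.2} in $t$, multiply by $u_t$, and integrate to pick up $\frac{d}{dt}\|\sqrt\rho u_t\|_{L^2}^2+\|\nabla u_t\|_{L^2}^2$; the troublesome terms $\int\rho_t u_t\cdot u_t$ and $\int(\rho v)_t\cdot\nabla u\,u_t$ are handled with $H^2\hookrightarrow L^\infty$, the bound on $\|\rho_t\|_{H^1}$, and $\|v_t\|_{H^1}$ from \eqref{4}. Gr\"onwall, together with the time $T$ chosen so small that the accumulated constants stay $O(1)$, yields the first estimate in \eqref{u1}, with the stated power $c_0^{5+2\gamma}$ coming from $\|\nabla p(0)\|_{L^2}^2\sim\|\rho_0^{\gamma-1}\nabla\rho_0\|_{L^2}^2\le Cc_0^{2\gamma}$ times the other $c_0$ factors.

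\emph{Step 2 (elliptic regularity, $H^2$ and $H^3$).} Rewrite \eqref{2.2} as the elliptic system $-\Delta u-\nabla\operatorname{div}u=F$ with $F:=-\rho u_t-\rho v\cdot\nabla u-\nabla p-\frac23\nabla(\rho\pi)$, and apply the standard $H^{s+2}$ estimate $\|u\|_{H^{s+2}}\le C(\|F\|_{H^s}+\|u\|_{H^1})$ for the Lam\'e operator under the given boundary condition. For $s=0$: $\|F\|_{L^2}\le\|\rho\|_{L^\infty}\|u_t\|_{L^2}+\|\rho v\cdot\nabla u\|_{L^2}+\|\nabla p\|_{L^2}+C\|\nabla(\rho\pi)\|_{L^2}$, each factor estimated by Lemma \ref{lem1}, Step 1, and \eqref{4} (note $\|\rho\pi\|_{H^1}\le C\|\rho\|_{H^3}\|\pi\|_{H^1}$), giving the $H^2$ bound. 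For $s=1$ one needs $\|F\|_{H^1}$, which brings in $\|\nabla u_t\|_{L^2}$ (bounded in $L^2_t$ from Step 1, so here one argues pointwise in $t$ after noting the parabolic smoothing, or equivalently uses the already-established $\|u\|_{H^2}$ in the convective term), $\|\nabla p\|_{H^1}\le C\|\rho\|_{H^2}^\gamma$-type bounds, and $\|\nabla(\rho\pi)\|_{H^1}\le C\|\rho\|_{H^3}\|\pi\|_{H^2}\le Cc_0c_5$; collecting the factors reproduces $c_0^{13/2+3\gamma}c_1^4c_2c_5$.

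\emph{Step 3 ($L^2_tH^4$).} Differentiate nothing further; instead apply the $H^4$ elliptic estimate $\|u\|_{H^4}\le C(\|F\|_{H^2}+\|u\|_{H^1})$ and integrate in $t$. The new ingredient in $\|F\|_{H^2}$ is $\|\rho u_t\|_{H^2}\le C\|\rho\|_{H^3}\|u_t\|_{H^2}$ and the convective term $\|\rho v\cdot\nabla u\|_{H^2}\le C\|\rho\|_{H^3}\|v\|_{H^3}\|u\|_{H^3}$, plus $\|\nabla(\rho\pi)\|_{H^2}\le C\|\rho\|_{H^3}\|\pi\|_{H^3}$; integrating in time uses $\int_0^t\|\pi\|_{H^3}^2\,dt\le c_1$ and $\int_0^t\|u_t\|_{H^2}^2\,dt$, the latter obtained from a further energy estimate on the time-differentiated equation (multiply $\partial_t$\eqref{2.2} by $\partial_t$ of the elliptic inverse, or simply estimate $\|u_t\|_{H^2}$ elliptically from the $t$-differentiated equation $-\Delta u_t-\nabla\operatorname{div}u_t=F_t$ and integrate). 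This gives $\int_0^t\|u\|_{H^4}^2\,ds\le Cc_0^{9+6\gamma}c_1^5c_2^2$.

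The main obstacle is bookkeeping the polynomial powers of the $c_i$ so that they match the claimed exponents while keeping every term that is \emph{not} multiplied by a positive power of $T$ under control — this is why the problem-specific choice of $T$ as that complicated product of negative powers of the $c_i$ is essential. The only genuinely delicate analytic point, as the authors flag in the introduction, is the coupling through $\nabla(\rho\pi)$ and the pressure gradient: one must be careful that estimating $\|\nabla(\rho\pi)\|_{H^2}$ (and later, in the $h$-equation, terms like $\int\partial_j\bar\rho\,\partial_j\rho\,\bar h$) really only costs $\|\rho\|_{H^3}$ and not more, which is exactly why the regularity class $\rho\in C(0,T;H^3)$ from Lemma \ref{lem1} was set up. Everything else is the standard parabolic-energy-plus-elliptic-regularity machine run to third order.
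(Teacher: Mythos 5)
Your overall strategy coincides with the paper's: a time-differentiated energy estimate plus Gr\"onwall for $\|\sqrt{\rho}u_t\|_{L^2}^2+\|u\|_{H^1}^2+\int_0^t\|\nabla u_t\|_{L^2}^2$, followed by an elliptic-regularity bootstrap for the Lam\'e operator at the levels $H^2$, $H^3$ and $L^2_tH^4$, with the same identification of which $c_i$ each forcing term costs. Steps 1 and 2 of your proposal are essentially what the paper does (the paper folds your preliminary $u_t$-multiplication into the trivial inequalities for $\frac{d}{dt}\|u\|_{L^2}^2$ and $\frac{d}{dt}\|\nabla u\|_{L^2}^2$, and it obtains the key intermediate bound $\|\nabla u\|_{H^1}\le Cc_0^{2\gamma}(\|\sqrt{\rho}u_t\|_{L^2}+c_1^2\|\nabla u\|_{L^2}+c_0c_1)$ from elliptic regularity \emph{before} closing the Gr\"onwall loop, since it is needed to absorb $\|\nabla u\|_{L^3}^2$ in the term $I_2$).

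The one place where your proposal is materially incomplete is Step 3, which is in fact the hardest part of the paper's argument. To bound $\int_0^t\|u\|_{H^4}^2$ you correctly reduce to $\int_0^t\|\rho u_t\|_{H^2}^2$, but your two suggested routes do not close as stated: estimating $\|u_t\|_{H^2}$ elliptically from $-\Delta u_t-\nabla\operatorname{div}u_t=F_t$ introduces $\|\rho u_{tt}\|_{L^2}$ on the right-hand side, and nothing established up to that point controls $\int_0^t\|\sqrt{\rho}u_{tt}\|_{L^2}^2$. The paper closes this by a genuinely second-level energy estimate: it tests the $t$-differentiated momentum equation against $u_{tt}$, producing $\int_0^t\int\rho u_{tt}^2+\sup_t\|\nabla u_t\|_{L^2}^2\le Cc_0^{6+6\gamma}c_1^5c_2^2$ after estimating six commutator-type terms, and it must also read off the initial value $\lim_{t\to 0}\|\nabla u_t(t)\|_{L^2}^2\le Cc_0^{2\gamma+4}$ directly from the equation at $t=0$ (using the $H^3$ regularity of $u_0$ and $\rho_0$) to start that estimate. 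Your phrase ``a further energy estimate on the time-differentiated equation'' points in the right direction, but without naming $u_{tt}$ as the test function and without the initial-time compatibility computation the $H^4$ bound is not actually derived; this is the ingredient you would need to supply to make Step 3 rigorous.
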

\begin{proof}We only need to prove
the estimates. Differentiating equation (\ref{2.2}) with respect to t, then multiplying
both sides of the result by $u_{t}$ and integrating over $\Omega$, we derive that
\begin{eqnarray}
&&\frac{1}{2}\frac{\mbox{d}}{\mbox{d}t}\int {\rho u_{t}^{2}\mbox{d}x}+\|\nabla u_{t}\|_{L^{2}}^{2}+\|\mbox{div} u_{t}\|_{L^{2}}^{2}\nonumber\\
&&=-\int \rho_{t}v\cdot\nabla u\cdot u_{t}-\int \rho v_{t}\cdot\nabla u\cdot u_{t}-2\int\rho v\cdot\nabla u_{t}\cdot u_{t}-\int\nabla p_{t}\cdot u_{t}-\frac{2}{3}\int[\nabla(\rho\pi)]_{t}\cdot u_{t}\nonumber\\
&&=I_{1}+I_{2}+I_{3}+I_{4}+I_{5},\label{u eqn}
\end{eqnarray}
where we have used  equation (\ref{2.1}) and integration by parts.
We will estimate $I_{i}\ (i=1,2,\cdots,5)$ item by item.

First, because $\rho$ has lower bound away from zero, we easily deduce $\|u_{t}\|_{L^{2}}\leq C\|\sqrt{\rho}u_{t}\|_{L^{2}}$.
  Therefore, using H\"{o}lder, Sobolev and Young's inequalities and (\ref{4}), we have
\begin{eqnarray}
&&I_{1}\leq C\|v\|_{L^{\infty}}\|\rho_{t}\|_{L^{3}}\|\nabla u\|_{L^{2}}\|u_{t}\|_{L^{6}}
\leq C\|v\|_{L^{\infty}}\|\rho_{t}\|_{L^{3}}\|\nabla u\|_{L^{2}}(\|\sqrt{\rho}u_{t}\|_{L^{2}}+\|\nabla u_{t}\|_{L^{2}})\nonumber\\
&&\leq Cc_{0}^{2}c_{2}^{4}\|\nabla u\|_{L^{2}}^{2}+C\|\sqrt{\rho}u_{t}\|_{L^{2}}^{2}+\frac{1}{8}\|\nabla u_{t}\|_{L^{2}}^{2},\label{I1}
\end{eqnarray}
\begin{eqnarray}
 I_{3}\leq C\|\rho\|_{L^{\infty}}^{\frac{1}{2}}\|v\|_{L^{\infty}}\|\nabla u_{t}\|_{L^{2}}\|\sqrt{\rho}u_{t}\|_{L^{2}}
 \leq Cc_{0}c_{2}^{2}\|\sqrt{\rho}u_{t}\|_{L^{2}}^{2}+\frac{1}{8}\|\nabla u_{t}\|_{L^{2}}^{2},\label{I3}
 \end{eqnarray}

\begin{eqnarray}
I_{2}
\leq C\|\rho\|_{L^{\infty}}^{\frac{1}{2}}\|v_{t}\|_{L^{6}}\|\nabla u\|_{L^{3}}\|\sqrt{\rho}u_{t}\|_{L^{2}}
\leq C\eta^{-1}c_{0}\|\nabla u\|_{L^{3}}^{2}+\eta\|v_{t}\|_{H^{1}}^{2}\|\sqrt{\rho}u_{t}\|_{L^{2}}^{2},\label{I2}
\end{eqnarray}
where $\eta>0$ is a small number to be determined later.

Next, to evaluate $\|\nabla u\|_{L^{3}}^{2}$ in (\ref{I2}), we can first use Sobolev's interpolation inequality to get
\be
\|\nabla u\|_{L^{3}}^{2}\leq C\|\nabla u\|_{L^{2}}\|\nabla u\|_{L^{6}}\leq C\|\nabla u\|_{L^{2}}\|\nabla u\|_{H^{1}}.\label{grad u L3}
\ee
Then, applying the standard elliptic regularity result to equation (\ref{2.2}) and using (\ref{grad u L3}), we have
 \begin{eqnarray}
 \|\nabla u\|_{H^{1}}
 \leq Cc_{0}^{\gamma}(\|\sqrt{\rho}u_{t}\|_{L^{2}}+\|v\|_{L^{6}}\|\nabla u\|_{L^{2}}^{\frac{1}{2}}\|\nabla u\|_{H^{1}}^{\frac{1}{2}}
 +\|\nabla \rho\|_{L^{2}}+\|\nabla\rho\|_{L^{4}}\|\pi\|_{L^{4}}+\|\nabla\pi\|_{L^{2}}),\nonumber
 \end{eqnarray}
 thus Young's inequality and (\ref{4}) yield
 \begin{equation}
 \|\nabla u\|_{H^{1}}\leq Cc_{0}^{2\gamma}(\|\sqrt{\rho}u_{t}\|_{L^{2}}+c_{1}^{2}\|\nabla u\|_{L^{2}}+c_{0}c_{1}).\label{u H1}
 \end{equation}
Combining  (\ref{I2}), (\ref{grad u L3}) and (\ref{u H1}) and using Young's inequality, we get
 \begin{eqnarray}
 &I_{2}&\leq C\eta^{-1}c_{0}^{2\gamma+1}(\|\sqrt{\rho}u_{t}\|_{L^{2}}^{2}+c_{1}^{2}\|\nabla u\|_{L^{2}}^{2}
 +c_{0}^{2}c_{1}^{2})
 +\eta\|v_{t}\|_{H^{1}}^{2}\|\sqrt{\rho}u_{t}\|_{L^{2}}^{2}.\label{I2-2}
 \end{eqnarray}

 By integration by parts, we have
 \begin{eqnarray}
 I_{4}=\int p_{t}\mbox{div} u_{t}
 \leq Cc_{0}^{\gamma-1}\|\rho_{t}\|_{L^{2}}\|\nabla u_{t}\|_{L^{2}}\leq Cc_{0}^{2\gamma}c_{2}^{2}+\frac{1}{8}\|\nabla u_{t}\|_{L^{2}}^{2},\label{I4}
 \end{eqnarray}
 \begin{eqnarray}
 &I_{5}&=\frac{2}{3}\int\rho_{t}\pi\nabla\cdot u_{t}-\frac{2}{3}\int\pi_{t}\nabla\rho\cdot u_{t}
 -\frac{2}{3}\int\rho\nabla\pi_{t}\cdot u_{t}\nonumber\\
 &\leq& C\|\rho_{t}\|_{L^{3}}\|\pi\|_{L^{6}}\|\nabla u_{t}\|_{L^{2}}
 +Cc_{0}^{\frac{1}{2}}\|\nabla\rho\|_{L^{3}}\|\pi_{t}\|_{L^{6}}\|\sqrt{\rho}u_{t}\|_{L^{2}}
 +Cc_{0}^{\frac{1}{2}}\|\nabla\pi_{t}\|_{L^{2}}\|\sqrt{\rho}u_{t}\|_{L^{2}}\label{I5}\\
 &\leq& Cc_{0}^{2}c_{1}^{2}c_{2}^{2}+C\eta^{-1}c_{0}^{3}+C\eta\|\pi_{t}\|_{H^{1}}^{2}\|\sqrt{\rho}u_{t}\|_{L^{2}}^{2}
 +\frac{1}{8}\|\nabla u_{t}\|_{L^{2}}^{2}.\nonumber
  \end{eqnarray}
  On the other hand, we easily have
  \be
\frac{\mbox{d}}{\mbox{d}t}\int|\nabla u|^{2}
=2\int \nabla u\cdot\nabla u_{t}\leq\frac{1}{8}\|\nabla u_{t}\|_{L^{2}}^{2}+C\|\nabla u\|_{L^{2}}^{2},\label{grad u L2}
\ee
and
\be
\frac{\mbox{d}}{\mbox{d}t}\int|u|^{2}\leq Cc_{0}^{\frac{1}{2}}\|\sqrt{\rho}u_{t}\|_{L^{2}}\|u\|_{L^{2}}
\leq Cc_{0}\|\sqrt{\rho}u_{t}\|_{L^{2}}^{2}+C\|u\|_{L^{2}}^{2}.\label{u L2}
\ee
  Combining (\ref{u eqn})-(\ref{I3}) and  (\ref{I2-2})-(\ref{u L2}), we get
\begin{eqnarray}
&&\frac{\mbox{d}}{\mbox{d}t}(\|\sqrt{\rho}u_{t}\|_{L^{2}}^{2}+\|u\|_{H^{1}}^{2})+\|\nabla u_{t}\|_{L^{2}}^{2}\\
&&\leq C(c_{0}^{2}c_{2}^{4}+\eta^{-1}c_{0}^{2\gamma+1}c_{1}^{2}+\eta\|\pi_{t}\|_{H^{1}}^{2}
+\eta\|v_{t}\|_{H^{1}}^{2})(\|\sqrt{\rho}u_{t}\|_{L^{2}}^{2}+\|u\|_{H^{1}}^{2})\nonumber\\
&&+C(c_{0}^{2\gamma}c_{1}^{2}c_{2}^{2}+\eta^{-1}c_{0}^{2\gamma+3}c_{1}^{2}),\nonumber
\end{eqnarray}
setting $\eta=\frac{1}{c_{1}}$ and using Gronwall's inequality, we derive
\be
\|\sqrt{\rho}u_{t}\|_{L^{2}}^{2}+\|u\|_{H^{1}}^{2}+\int_{0}^{t}\|\nabla u_{t}\|_{L^{2}}^{2}\mbox{d}s\leq Cc_{0}^{5+2\gamma}\label{u's 1}
\ee
for $0\leq t\leq T$, where we have used the fact that $\mathop{\lim}_{t\rightarrow 0}(\|\sqrt{\rho}u_{t}\|_{L^{2}}^{2}+\|u\|_{H^{1}}^{2})\leq Cc_{0}^{5+2\gamma}$.

Next,   by  (\ref{u H1}) and (\ref{u's 1}), we deduce
\begin{eqnarray}
\|\nabla u\|_{H^{1}}\leq Cc_{0}^{\frac{5}{2}+3\gamma}c_{1}^{2},\label{u H1-2}
\end{eqnarray}
which   implies (\ref{u1}) by (\ref{u's 1}).

Next, we will estimate $\int_{0}^{t}\|u\|_{H^{4}}^{2}\mbox{d}t$.
By the standard elliptic regularity result of equation (\ref{2.2}), we have
\begin{eqnarray}
\|\nabla^{4}u\|_{L^{2}}\leq \|\rho u_{t}\|_{H^{2}}+\|\rho v\cdot\nabla u\|_{H^{2}}+\|\nabla p\|_{H^{2}}+\|\frac{2}{3}\nabla(\rho\pi)\|_{H^{2}}.\label{grad4u}
\end{eqnarray}
By simple calculation, the first term of the right hand side of $(\ref{grad4u})$ can be controlled as
\be
\|\rho u_{t}\|_{H^{2}}\leq C(\|\rho u_{t}\|_{L^{2}}+\|\rho\|_{H^{2}}\|u_{t}\|_{H^{2}})\leq Cc_{0}\|u_{t}\|_{H^{2}}.\label{rouutH2-2}
\ee
In order to estimate $\|\nabla^{2}u_{t}\|_{L^{2}}$,
differentiating equation (\ref{2.2}) with respect to $t$ yields
\be
&&\Delta u_{t}+\nabla \mbox{div} u_{t}\label{grad2ut}\\
&&=\rho_{t}u_{t}+\rho u_{tt}+\rho_{t}v\cdot\nabla u+\rho v_{t}\cdot\nabla u
+\rho v\cdot\nabla u_{t}+\nabla p_{t}+\frac{2}{3}(\nabla\rho_{t}\pi+\rho_{t}\nabla\pi+\nabla\rho\pi_{t}+\rho\nabla\pi_{t}),\nonumber
\ee
applying  the standard elliptic regularity result to (\ref{grad2ut}) and using (\ref{u's 1}), one obtains
\begin{eqnarray}
&&\|\nabla^{2}u_{t}\|_{L^{2}}
\leq C(\|\rho_{t}\|_{L^{4}}\|u_{t}\|_{L^{4}}+\|\rho u_{tt}\|_{L^{2}}+\|\rho_{t}\|_{L^{4}}\|v\|_{L^{\infty}}\|\nabla u\|_{L^{4}}
+\|\rho\|_{L^{\infty}}\|v_{t}\|_{L^{4}}\|\nabla u\|_{L^{4}}\nonumber\\
&&+\|v\|_{L^{\infty}}\|u_{t}\|_{H^{1}}
+\|\rho\|_{H^{2}}^{\gamma}\|\rho_{t}\|_{H^{1}}
+\|\pi\|_{L^{\infty}}\|\rho_{t}\|_{H^{1}}+\|\rho_{t}\|_{L^{4}}\|\nabla\pi\|_{L^{4}}\nonumber\\
&&+\|\nabla\rho\|_{L^{4}}\|\pi_{t}\|_{L^{4}}
+\|\rho\|_{L^{\infty}}\|\nabla\pi_{t}\|_{L^{2}})\nonumber\\
&&\leq C(\|\rho u_{tt}\|_{L^{2}}+c_{0}^{\frac{7}{2}+3\gamma}c_{1}^{2}c_{2}^{2}c_{5}+c_{0}^{\frac{7}{2}+3\gamma}c_{1}^{2}\|v_{t}\|_{H^{1}}
+c_{0}c_{2}\|u_{t}\|_{H^{1}}+c_{0}\|\pi_{t}\|_{H^{1}}),\label{grad2ut2}
\end{eqnarray}
therefore, the key point is to estimate $\|\rho u_{tt}\|_{L^{2}}$. Because we have the fact $\|\rho u_{tt}\|_{L^{2}}\leq C\|\sqrt{\rho} u_{tt}\|_{L^{2}}$, we could first estimate $\|\sqrt{\rho} u_{tt}\|_{L^{2}}$ as follows.

Multiplying both sides of (\ref{grad2ut}) by $u_{tt}$ and integrating the result over $\Omega$ yield
\begin{eqnarray}
&&\int\rho u_{tt}^{2}\mbox{d}x+\frac{1}{2}\frac{\mbox{d}}{\mbox{d}t}\|\nabla u_{t}\|_{L^{2}}^{2}
+\frac{1}{2}\frac{\mbox{d}}{\mbox{d}t}\|\mbox{div} u_{t}\|_{L^{2}}^{2}\nonumber\\
&&=-\int\rho_{t}u_{t}\cdot u_{tt}-\int\rho_{t}v\cdot\nabla u\cdot u_{tt}-\int\rho v_{t}\cdot\nabla u\cdot u_{tt}
-\int\rho v\cdot\nabla u_{t}\cdot u_{tt}-\int\nabla p_{t}\cdot u_{tt}\nonumber\\
&&-\frac{2}{3}\int(\pi\nabla\rho_{t}+\rho_{t}\nabla\pi+\pi_{t}\nabla\rho+\rho\nabla\pi_{t})\cdot u_{tt}
=J_{1}+J_{2}+J_{3}+J_{4}+J_{5}+J_{6}.\label{grad2utt}
\end{eqnarray}
 Using H\"{o}lder, Sobolev and Young's inequalities and (\ref{4}) and (\ref{u's 1}), we get
 \begin{eqnarray}
 &&J_{1}\leq Cc_{0}^{\frac{1}{2}}\|\rho_{t}\|_{L^{3}}\|u_{t}\|_{L^{6}}\|\sqrt{\rho}u_{tt}\|_{L^{2}}
 \leq Cc_{0}^{\frac{1}{2}}\|\rho_{t}\|_{L^{3}}(\|\sqrt{\rho}u_{t}\|_{L^{2}}+\|\nabla u_{t}\|_{L^{2}})\|\sqrt{\rho}u_{tt}\|_{L^{2}}\nonumber\\
 &&\leq Cc_{0}^{3}c_{2}^{2}\|\nabla u_{t}\|_{L^{2}}^{2}+Cc_{0}^{8+2\gamma}c_{2}^{2}+\frac{1}{18}\|\sqrt{\rho}u_{tt}\|_{L^{2}}^{2},\label{J1}
 \end{eqnarray}
  \begin{eqnarray}
  J_{2}\leq Cc_{0}^{\frac{1}{2}}\|\sqrt{\rho}u_{tt}\|_{L^{2}}\|\rho_{t}\|_{L^{3}}
  \|v\|_{L^{\infty}}\|\nabla u\|_{L^{6}}
  \leq Cc_{0}^{8+6\gamma}c_{1}^{4}c_{2}^{4}+\frac{1}{18}\|\sqrt{\rho}u_{tt}\|_{L^{2}}^{2},\label{J2}
  \end{eqnarray}
  \begin{eqnarray}
  J_{3}\leq Cc_{0}^{\frac{1}{2}}\|\sqrt{\rho}u_{tt}\|_{L^{2}}\|v_{t}\|_{L^{3}}\|\nabla u\|_{L^{6}}
  \leq Cc_{0}^{6+6\gamma}c_{1}^{4}\|v_{t}\|_{H^{1}}^{2}+\frac{1}{18}\|\sqrt{\rho}u_{tt}\|_{L^{2}}^{2},\label{J3}
  \end{eqnarray}
  \begin{eqnarray}
  J_{4}\leq Cc_{0}^{\frac{1}{2}}\|v\|_{L^{\infty}}\|\sqrt{\rho}u_{tt}\|_{L^{2}}\|\nabla u_{t}\|_{L^{2}}
  \leq Cc_{0}c_{2}^{2}\|\nabla u_{t}\|_{L^{2}}^{2}+\frac{1}{18}\|\sqrt{\rho}u_{tt}\|_{L^{2}}^{2},\label{J4}
  \end{eqnarray}
  \begin{eqnarray}
  J_{5}\leq Cc_{0}^{\frac{1}{2}}\|\sqrt{\rho}u_{tt}\|_{L^{2}}\|\nabla p_{t}\|_{L^{2}}
  \leq Cc_{0}^{2\gamma+1}c_{2}^{2}+\frac{1}{18}\|\sqrt{\rho}u_{tt}\|_{L^{2}}^{2},\label{J5}
    \end{eqnarray}
    \begin{eqnarray}
    &&J_{6}
    \leq Cc_{0}^{\frac{1}{2}}\|\pi\|_{L^{\infty}}\|\sqrt{\rho}u_{tt}\|_{L^{2}}\|\nabla \rho_{t}\|_{L^{2}}+
    Cc_{0}^{\frac{1}{2}}\|\sqrt{\rho}u_{tt}\|_{L^{2}}\|\nabla\pi\|_{L^{4}}\|\rho_{t}\|_{L^{4}}\nonumber\\
    &&+Cc_{0}^{\frac{1}{2}}\|\sqrt{\rho}u_{tt}\|_{L^{2}}\|\nabla\rho\|_{L^{\infty}}\|\pi_{t}\|_{L^{2}}
    +Cc_{0}^{\frac{1}{2}}\|\sqrt{\rho}u_{tt}\|_{L^{2}}\|\nabla\pi_{t}\|_{L^{2}}\nonumber\\
    &&\leq Cc_{0}^{3}c_{2}^{2}c_{5}^{2}+Cc_{0}^{3}\|\pi_{t}\|_{H^{1}}^{2}+\frac{2}{9}\|\sqrt{\rho}u_{tt}\|_{L^{2}}^{2},\label{J6}
     \end{eqnarray}
 inserting (\ref{J1})-(\ref{J6}) to (\ref{grad2utt}), then integrating the result  over $(0,t)$, we derive
 \be
 \int_{0}^{t}\int_{\Omega}\rho u_{tt}^{2}dx\mbox{d}t+\|\nabla u_{t}\|_{L^{2}}^{2}\leq Cc_{0}^{6+6\gamma}c_{1}^{5}c_{2}^{2},\label{rouutt2}
 \ee
 where we have used equation (\ref{2.2}) to get $\lim_{t\rightarrow0}\|\nabla u_{t}(t)\|_{L^{2}}^{2}\leq Cc_{0}^{2\gamma+4}$.

  So, combining (\ref{rouutH2-2}), (\ref{grad2ut2}) and (\ref{rouutt2}), we obtain
  \be
  \int_{0}^{t}\|\rho u_{t}\|_{H^{2}}^{2}\leq Cc_{0}^{9+6\gamma}c_{1}^{5}c_{2}^{2}.\label{rouutH2}
  \ee
In the following, we shall estimate the rest terms of the inequality
(\ref{grad4u}).

   For the second term of the inequality (\ref{grad4u}), direct calculation yields
\begin{eqnarray}
&&\|\rho v\cdot\nabla u\|_{H^{2}}\leq C\|\rho\|_{H^{2}}\|v\|_{H^{2}}\|u\|_{H^{3}}\leq Cc_{0}c_{2}\|u\|_{H^{3}},\label{rouvgradu}
\end{eqnarray}
therefore, we have to evaluate $\|u\|_{H^{3}}$.
In fact, Applying the standard elliptic regularity result to equation $(\ref{2.2})$, we obtain
\begin{eqnarray}
\|\nabla^{3}u\|_{L^{2}}\leq C(\|\rho u_{t}\|_{H^{1}}+\|\rho v\cdot\nabla u\|_{H^{1}}+\|\nabla p\|_{H^{1}}+\|\nabla(\rho\pi)\|_{H^{1}}),\label{grad3u}
\end{eqnarray}
we could estimate the right hand side of (\ref{grad3u}) item by item.

First,  from (\ref{u's 1}), we have $\|u_{t}\|_{L^{2}}\leq Cc_{0}^{\frac{5}{2}+\gamma}$, thus
\begin{eqnarray}
\|\rho u_{t}\|_{H^{1}}
\leq Cc_{0}\|u_{t}\|_{L^{2}}+\|\nabla\rho\|_{L^{\infty}}\|u_{t}\|_{L^{2}}+Cc_{0}\|\nabla u_{t}\|_{L^{2}}
\leq Cc_{0}^{\frac{7}{2}+\gamma}+Cc_{0}\|\nabla u_{t}\|_{L^{2}}.\label{grad3u1}
\end{eqnarray}
Second, using Sobolev's interpolation inequality and Young's inequality, we get
\begin{eqnarray}
&&\|\rho v\cdot\nabla u\|_{H^{1}}\leq C(\|\rho v\cdot\nabla u\|_{L^{2}}+\|\nabla(\rho v\cdot\nabla u)\|_{L^{2}})\nonumber\\
&&\leq C(c_{0}\|v\|_{L^{\infty}}\|\nabla u\|_{L^{2}}+\|\nabla\rho\|_{L^{\infty}}\|v\|_{L^{\infty}}\|\nabla u\|_{L^{2}}
+c_{0}\|\nabla v\|_{L^{2}}\|\nabla u\|_{L^{2}}^{\frac{1}{4}}\|\nabla^{3} u\|_{L^{2}}^{\frac{3}{4}}\nonumber\\
&&+c_{0}\|v\|_{L^{\infty}}\|\nabla^{2}u\|_{L^{2}})
\leq Cc_{0}^{\frac{13}{2}+3\gamma}c_{1}^{4}c_{2}+\frac{3}{4}\|u\|_{H^{3}}.\label{grad3u2}
\end{eqnarray}
Third, due to (\ref{routH3}), we easily derive
\begin{eqnarray}
\|\nabla p\|_{H^{1}}\leq Cc_{0}^{2}.\label{grad3u3}
\end{eqnarray}
Last, by simple calculation, one gets
\begin{eqnarray}
\|\nabla(\rho\pi)\|_{H^{1}}
\leq C\|\rho\|_{H^{3}}\|\pi\|_{H^{2}}
\leq Cc_{0}c_{5}.\label{grad3u4}
\end{eqnarray}
Combining (\ref{rouutt2}) and (\ref{grad3u})-(\ref{grad3u4}), we deduce
 \begin{eqnarray}
   \|u\|_{H^{3}}
   \leq Cc_{0}^{\frac{13}{2}+3\gamma}c_{1}^{4}c_{2}c_{5}.\label{uH3}
    \end{eqnarray}
  Next, by simple calculation, the third and fourth terms on the right hand side of (\ref{grad4u}) can be estimated as
   \begin{eqnarray}
   \|\nabla p\|_{H^{2}}\leq Cc_{0}^{3},\
   \|\nabla(\rho\pi)\|_{H^{2}}\leq Cc_{0}\|\pi\|_{H^{3}}.\label{gradpH2}
   \end{eqnarray}
   Combining (\ref{u's 1}), (\ref{grad4u}),  (\ref{rouutH2}), (\ref{rouvgradu}) and (\ref{uH3})-(\ref{gradpH2}), one deduce
\be
\int_{0}^{t}\|u\|_{H^{4}}^{2}\mbox{d}t\leq Cc_{0}^{9+6\gamma}c_{1}^{5}c_{2}^{2},\label{uH4}
\ee
  for $0\leq t\leq T$.

Thus, we complete the proof of Lemma \ref{lem2}.
\end{proof}

In the following part, we estimate the turbulent kinetic energy  $k$.
\begin{lem}\label{lem3}
There exists a unique strong solution k to the initial boundary value problem (\ref{2.4}) and (\ref{1})  such that
\begin{eqnarray}
&&\|\sqrt{\rho}k_{t}\|_{L^{2}}^{2}
+\|k\|_{H^{1}}^{2}
+\int_{0}^{t}\|\nabla k_{t}\|_{L^{2}}^{2}\mbox{d}s\leq Cc_{0}^{5},\label{k1}\\
&&\|k\|_{H^{2}}\leq Cc_{0}^{\frac{7}{2}}c_{1}c_{2}^{2},\
 \int_{0}^{t}\|k\|_{H^{3}}^{2}\mbox{d}s\leq Cc_{0}^{7}\label{k2}
\end{eqnarray}
for $0\leq t\leq T$.
\end{lem}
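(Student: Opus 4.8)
The plan is to repeat, for the parabolic equation (\ref{2.4}), the argument already carried out for $u$ in Lemma \ref{lem2}. Existence and uniqueness of a strong solution $k$ follow from standard linear parabolic theory: by Lemma \ref{lem1} the coefficient $\rho$ is smooth and bounded below, $v$ is given with the regularity in (\ref{4}), and $G^{'}-\rho\theta\in L^{2}(0,T;L^{2})$ is a given forcing; so only the a priori estimates (\ref{k1})--(\ref{k2}) need proof. Two simplifications are worth noting: (\ref{2.4}) does not involve $u$, so Lemma \ref{lem2} is not used, and it contains no division by $\pi$, so the positive lower bound on the turbulent kinetic energy plays no role here.

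To obtain (\ref{k1}), I differentiate (\ref{2.4}) in $t$, multiply by $k_{t}$ and integrate over $\Omega$. Using the continuity equation (\ref{2.1}) and integration by parts — the boundary terms vanish because $v\cdot\overrightarrow{n}|_{\partial\Omega}=0$ and $\partial k_{t}/\partial\overrightarrow{n}|_{\partial\Omega}=0$, and $\int\rho v\cdot\nabla k_{t}\,k_{t}=-\frac{1}{2}\int\rho_{t}k_{t}^{2}$ — one gets
\[
\frac{1}{2}\frac{\mbox{d}}{\mbox{d}t}\int\rho k_{t}^{2}\,\mbox{d}x+\|\nabla k_{t}\|_{L^{2}}^{2}=\sum_{i}K_{i},
\]
the $K_{i}$ being the terms arising from $\rho_{t}k_{t}$ (equivalently $-2\int\rho v\cdot\nabla k_{t}\,k_{t}$), $\rho_{t}v\cdot\nabla k$, $\rho v_{t}\cdot\nabla k$, $\partial_{t}G^{'}$ and $(\rho\theta)_{t}$. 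Each $K_{i}$ is estimated by H\"{o}lder's, Sobolev's and Young's inequalities, using $H^{1}(\Omega)\hookrightarrow L^{6}(\Omega)$, the bound $\|k_{t}\|_{L^{6}}\le C(\|\sqrt{\rho}k_{t}\|_{L^{2}}+\|\nabla k_{t}\|_{L^{2}})$, the data bounds (\ref{4}) and Lemma \ref{lem1}, always absorbing a small multiple of $\|\nabla k_{t}\|_{L^{2}}^{2}$ into the left side; for $\|\nabla k\|_{L^{3}}$-type factors one uses the interpolation $\|\nabla k\|_{L^{3}}^{2}\le C\|\nabla k\|_{L^{2}}\|k\|_{H^{2}}$ together with the elliptic bound for $\|k\|_{H^{2}}$ established below and Young's inequality, exactly as in (\ref{grad u L3})--(\ref{u H1}). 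The contributions of $\partial_{t}G^{'}$ (which contains $\partial_{j}v^{i}\,\partial_{j}v^{i}_{t}$ as well as $\rho_{t}\pi$ and $\rho\pi_{t}$ paired with $\nabla v$) and of $(\rho\theta)_{t}$ are handled by pairing $\|\nabla v\|_{L^{\infty}}\le C\|v\|_{H^{3}}\le Cc_{3}$ with the $H^{1}$-norms of $v_{t},\pi_{t},\theta_{t}$ and splitting off a term $\eta(\|v_{t}\|_{H^{1}}^{2}+\|\pi_{t}\|_{H^{1}}^{2}+\|\theta_{t}\|_{H^{1}}^{2})\|\sqrt{\rho}k_{t}\|_{L^{2}}^{2}$, which is Gronwall-admissible since $\int_{0}^{T}(\|v_{t}\|_{H^{1}}^{2}+\|\pi_{t}\|_{H^{1}}^{2}+\|\theta_{t}\|_{H^{1}}^{2})\,\mbox{d}t\le c_{1}$ by (\ref{4}). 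Adding the elementary bounds $\frac{\mbox{d}}{\mbox{d}t}\|\nabla k\|_{L^{2}}^{2}\le\frac{1}{8}\|\nabla k_{t}\|_{L^{2}}^{2}+C\|\nabla k\|_{L^{2}}^{2}$ and $\frac{\mbox{d}}{\mbox{d}t}\|k\|_{L^{2}}^{2}\le Cc_{0}\|\sqrt{\rho}k_{t}\|_{L^{2}}^{2}+C\|k\|_{L^{2}}^{2}$ (using $\|k_{t}\|_{L^{2}}\le C\|\sqrt{\rho}k_{t}\|_{L^{2}}$), choosing $\eta=c_{1}^{-1}$ and applying Gronwall's inequality with the initial bound for $\|\sqrt{\rho}k_{t}\|_{L^{2}}^{2}+\|k\|_{H^{1}}^{2}$ read off from (\ref{2.4}) at $t=0$ (using $k_{0}\in H^{2}$ and $u_{0}\in H^{3}\hookrightarrow L^{\infty}$) yields (\ref{k1}).

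For (\ref{k2}) I apply the elliptic estimate for the Neumann problem to $-\Delta k=G^{'}-\rho\theta-\rho k_{t}-\rho v\cdot\nabla k$. At the $H^{2}$ level, $\|k\|_{H^{2}}\le C(\|\Delta k\|_{L^{2}}+\|k\|_{L^{2}})$ with $\|\rho k_{t}\|_{L^{2}}\le Cc_{0}^{1/2}\|\sqrt{\rho}k_{t}\|_{L^{2}}$, $\|\rho v\cdot\nabla k\|_{L^{2}}\le Cc_{0}c_{2}\|\nabla k\|_{L^{2}}$, $\|G^{'}\|_{L^{2}}\le C(\|\nabla v\|_{L^{4}}^{2}+\|\nabla v\|_{L^{6}}\|\rho\|_{L^{\infty}}\|\pi\|_{L^{3}})$ and $\|\rho\theta\|_{L^{2}}\le Cc_{0}c_{1}$, so (\ref{k1}) gives $\|k\|_{H^{2}}\le Cc_{0}^{7/2}c_{1}c_{2}^{2}$. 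At the $H^{3}$ level, $\|k\|_{H^{3}}\le C(\|\Delta k\|_{H^{1}}+\|k\|_{L^{2}})$; here $\|\nabla(\rho k_{t})\|_{L^{2}}\le Cc_{0}(\|\nabla k_{t}\|_{L^{2}}+\|k_{t}\|_{L^{2}})$ is square-integrable in $t$ by (\ref{k1}), while $\|\nabla(\rho v\cdot\nabla k)\|_{L^{2}}$, $\|\nabla G^{'}\|_{L^{2}}$ and $\|\nabla(\rho\theta)\|_{L^{2}}$ are bounded by fixed powers of the $c_{i}$ (using the just-proved bound on $\|k\|_{H^{2}}$, together with $\|v\|_{H^{3}}\le c_{3}$ and $\|v\|_{H^{2}}\le c_{2}$); hence $\int_{0}^{t}\|k\|_{H^{3}}^{2}\,\mbox{d}s\le C(\text{powers of }c_{i})\,T+Cc_{0}^{5}\le Cc_{0}^{7}$ by the choice of $T$. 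The main obstacle is the bookkeeping in the $\partial_{t}G^{'}$ terms of the energy estimate: keeping the powers of $c_{0}$ within the stated bounds forces one to pair $\|\nabla v\|_{L^{\infty}}$ — controlled only through the $H^{3}$-norm, not through any $L^{\infty}$ bound on $v_{t}$ — with $\|v_{t}\|_{H^{1}},\|\pi_{t}\|_{H^{1}},\|\theta_{t}\|_{H^{1}}$, and to run the interpolation for $\|\nabla k\|_{L^{3}}$ in tandem with the elliptic $H^{2}$ bound, exactly as was done for $u$ in Lemma \ref{lem2}.
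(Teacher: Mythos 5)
Your proposal is correct and follows essentially the same route as the paper: differentiate (\ref{2.4}) in $t$, test with $k_{t}$, estimate the six resulting terms (including $\partial_{t}G^{'}$ paired against $\|v_{t}\|_{H^{1}},\|\pi_{t}\|_{H^{1}},\|\theta_{t}\|_{H^{1}}$ with $\eta=c_{1}^{-1}$), close with Gronwall, and then recover the $H^{2}$ and $H^{3}$ bounds from elliptic regularity for the Neumann problem, integrating in time and using the smallness of $T$ for (\ref{k2}). Your side remarks — that Lemma \ref{lem2} and the lower bound on $\pi$ are not needed here — are accurate and consistent with the paper's argument.
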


\begin{proof}
We only need to prove the estimates.
Differentiating  equation (\ref{2.4}) with respect to t, then multiplying both sides of the result equation
by $k_{t}$ and integrating over $\Omega$, we get
\begin{eqnarray}
&&\frac{1}{2}\frac{\mbox{d}}{\mbox{d}t}\|\sqrt{\rho}k_{t}\|_{L^{2}}^{2}
+\|\nabla k_{t}\|_{L^{2}}^{2}\nonumber\\
&&=-\int \rho_{t}v\cdot\nabla k\cdot k_{t}
-\int \rho v_{t}\cdot\nabla k\cdot k_{t}
-2\int\rho v\cdot\nabla k_{t}\cdot k_{t}
+\int G_{t}^{'}\cdot k_{t}\nonumber\\
&&-\int \rho_{t}\theta\cdot k_{t}
-\int\rho\theta_{t}\cdot k_{t}
=\mathop{\sum}_{i=1}^{6}K_{i},\label{gradroukL2}
\end{eqnarray}
we could evaluate $K_{i}\ (i=1,\cdots,6)$ as follows.

First, using similar method of deriving $(\ref{I1})$, $(\ref{I2-2})$, $(\ref{I3})$, respectively, one has
\be
K_{1}
\leq Cc_{0}^{2}c_{2}^{4}\|\nabla k\|_{L^{2}}^{2}+C\|\sqrt{\rho}k_{t}\|_{L^{2}}^{2}+\frac{1}{10}\|\nabla k_{t}\|_{L^{2}}^{2},\label{K1}
\ee
\be
K_{2}\leq C\eta^{-1}c_{0}^{2\gamma+1}(\|\sqrt{\rho}k_{t}\|_{L^{2}}^{2}+c_{1}^{2}\|\nabla k\|_{L^{2}}^{2}
+c_{0}^{2}c_{1}^{2}c_{2}^{4})
+\eta\|v_{t}\|_{H^{1}}^{2}\|\sqrt{\rho}k_{t}\|_{L^{2}}^{2},\label{K2-2}
\ee

\be
K_{3}
\leq Cc_{0}c_{2}^{2}\|\sqrt{\rho}k_{t}\|_{L^{2}}^{2}+\frac{1}{10}\|\nabla k_{t}\|_{L^{2}}^{2}.\label{K3}
\ee
Next, differentiating $G^{'}$ with respect to $t$ and inserting the result thus obtained to $K_{4}$ yield
\begin{eqnarray}
&&K_{4}
\leq C\int|\nabla v_{t}||\nabla v||k_{t}|
+C\int|\rho||\pi||\nabla v_{t}||k_{t}|
+C\int|\rho_{t}||\pi||\nabla v||k_{t}|
+C\int|\rho||\pi_{t}||\nabla v||k_{t}|\nonumber\\
&&\leq Cc_{0}^{\frac{1}{2}}\|\sqrt{\rho}k_{t}\|_{L^{2}}\|\nabla v_{t}\|_{L^{2}}\|\nabla v\|_{L^{\infty}}
+Cc_{0}^{\frac{1}{2}}\|\pi\|_{L^{\infty}}\|\nabla v_{t}\|_{L^{2}}\|\sqrt{\rho}k_{t}\|_{L^{2}}\nonumber\\
&&+C\|\pi\|_{L^{\infty}}\|\rho_{t}\|_{L^{3}}\|\nabla v\|_{L^{2}}\|k_{t}\|_{L^{6}}
+Cc_{0}^{\frac{1}{2}}\|\sqrt{\rho}k_{t}\|_{L^{2}}\|\pi_{t}\|_{L^{6}}\|\nabla v\|_{L^{3}}\label{K4}\\
&&\leq C\eta^{-1}c_{0}c_{3}^{2}c_{5}^{2}+Cc_{0}^{2}c_{1}^{2}c_{2}^{2}c_{5}^{2}
+C\|\sqrt{\rho}k_{t}\|_{L^{2}}^{2}
+C\eta(\|v_{t}\|_{H^{1}}^{2}
+\|\pi_{t}\|_{H^{1}}^{2})\|\sqrt{\rho}k_{t}\|_{L^{2}}^{2}
+\frac{1}{10}\|\nabla k_{t}\|_{L^{2}}^{2}.\nonumber
\end{eqnarray}
Last, direct calculation leads to
\begin{eqnarray}
K_{5}\leq \|\rho_{t}\|_{L^{3}}\|\theta\|_{L^{2}}\|k_{t}\|_{L^{6}}
\leq Cc_{0}^{2}c_{1}^{2}c_{2}^{2}
+C\|\sqrt{\rho}k_{t}\|_{L^{2}}^{2}
+\frac{1}{10}\|\nabla k_{t}\|_{L^{2}}^{2},\label{K5}
\end{eqnarray}
\begin{eqnarray}
K_{6}\leq Cc_{0}^{\frac{1}{2}}\|\sqrt{\rho}k_{t}\|_{L^{2}}\|\theta_{t}\|_{L^{2}}
\leq C\eta^{-1}c_{0}+\eta\|\theta_{t}\|_{L^{2}}^{2}\|\sqrt{\rho}k_{t}\|_{L^{2}}^{2}.\label{K6}
\end{eqnarray}
On the other hand, we easily get
\begin{eqnarray}
&&\frac{\mbox{d}}{\mbox{d}t}\|\nabla k\|_{L^{2}}^{2}\leq \frac{1}{10}\|\nabla k_{t}\|_{L^{2}}^{2}
+C\|\nabla k\|_{L^{2}}^{2},\label{gradkL2}\\
&&\frac{\mbox{d}}{\mbox{d}t}\|k\|_{L^{2}}^{2}
\leq Cc_{0}\|\sqrt{\rho}k_{t}\|_{L^{2}}^{2}+C\|k\|_{L^{2}}^{2}.\label{kL2}
\end{eqnarray}
Combining (\ref{gradroukL2})-(\ref{kL2}), we obtain
\begin{eqnarray}
&&\frac{\mbox{d}}{\mbox{d}t}(\|\sqrt{\rho}k_{t}\|_{L^{2}}^{2}+\|k\|_{H^{1}}^{2})+\|\nabla k_{t}\|_{L^{2}}^{2}\nonumber\\
&&\leq C(c_{0}^{2}c_{2}^{4}+\eta^{-1}c_{0}^{2\gamma+1}c_{1}^{2}+\eta\|v_{t}\|_{H^{1}}^{2}+\eta\|\pi_{t}\|_{H^{1}}^{2}
+\eta\|\theta_{t}\|_{L^{2}}^{2})(\|\sqrt{\rho}k_{t}\|_{L^{2}}^{2}
+\|k\|_{H^{1}}^{2})\nonumber\\
&&+C(\eta^{-1}c_{0}^{2}c_{1}^{2}c_{2}^{4}c_{3}^{2}c_{5}^{2}+c_{0}^{2}c_{1}^{2}c_{2}^{2}c_{5}^{2}),
\end{eqnarray}
setting $\eta=c_{1}^{-1}$ and using Gronwall's inequality, we deduce
\be
\|\sqrt{\rho}k_{t}\|_{L^{2}}^{2}+\|k\|_{H^{1}}^{2}+\int_{0}^{t}\|\nabla k_{t}\|_{L^{2}}^{2}\mbox{d}s
\leq Cc_{0}^{5}\label{k's 1}
\ee
for $0\leq t\leq T$, where we have used the fact that $\mathop{\lim}_{t\rightarrow 0}(\|\sqrt{\rho}k_{t}\|_{L^{2}}^{2}+\|k\|_{H^{1}}^{2})\leq Cc_{0}^{5}$.

Then, by the standard elliptic regularity result of equation (\ref{2.4}) and using (\ref{k's 1}), we have
\begin{eqnarray}
\|\nabla k\|_{H^{1}}
\leq Cc_{0}^{\frac{1}{2}}\|\sqrt{\rho}k_{t}\|_{L^{2}}+Cc_{0}\|v\|_{L^{\infty}}\|\nabla k\|_{L^{2}}+
C\|\nabla v\|_{L^{4}}^{2}\nonumber\\
+Cc_{0}\|\pi\|_{L^{4}}\|\nabla v\|_{L^{4}}+Cc_{0}\|\theta\|_{L^{2}}
\leq Cc_{0}^{\frac{7}{2}}c_{1}c_{2}^{2},\label{k's 2}
\end{eqnarray}
and
\begin{eqnarray}
\|\nabla^{2} k\|_{H^{1}}\leq C(\|\rho k_{t}\|_{H^{1}}+\|\rho v\cdot\nabla k\|_{H^{1}}+\|G^{'}\|_{H^{1}}+\|\rho\theta\|_{H^{1}}).\label{grad2KH1}
\end{eqnarray}
To evaluate $\int_{0}^{t}\|k\|_{H^{3}}^{2}\mbox{d}t$,  we will estimate the right hand side of (\ref{grad2KH1}) item by item.

In fact, we derive by using (\ref{k's 1}) and (\ref{k's 2}) that
\begin{eqnarray}
\|\rho k_{t}\|_{H^{1}}\leq C(\|\rho k_{t}\|_{L^{2}}+\|\nabla(\rho k_{t})\|_{L^{2}})
\leq Cc_{0}^{\frac{7}{2}}+Cc_{0}\|\nabla k_{t}\|_{L^{2}},\label{rouktH1}
\end{eqnarray}
\begin{eqnarray}
&&\|\rho v\cdot\nabla k\|_{H^{1}}\leq C(\|\rho v\cdot\nabla k\|_{L^{2}}+\|\nabla(\rho v\cdot\nabla k)\|_{L^{2}})\nonumber\\
&&\leq C(c_{0}\|v\|_{L^{\infty}}\|\nabla k\|_{L^{2}}+\|\nabla\rho\|_{L^{\infty}}\|v\|_{L^{\infty}}\|\nabla k\|_{L^{2}}\nonumber\\
&&+c_{0}\|\nabla v\|_{L^{4}}\|\nabla k\|_{L^{4}}+c_{0}\|v\|_{L^{\infty}}\|\nabla^{2}k\|_{L^{2}})
\leq Cc_{0}^{\frac{9}{2}}c_{1}c_{2}^{3},\label{rouvgradkH1}
\end{eqnarray}
\begin{eqnarray}
&&\|G^{'}\|_{H^{1}}\leq C(\|\nabla v\|_{L^{4}}^{2}+\|\nabla v\cdot\rho\cdot\pi\|_{L^{2}}+\|\nabla v\cdot\nabla^{2}v\|_{L^{2}}
+\|\nabla(\nabla v\cdot\rho\cdot\pi)\|_{L^{2}})\nonumber\\
&&\leq C(\|\nabla v\|_{L^{4}}^{2}+c_{0}\|\pi\|_{L^{\infty}}\|\nabla v\|_{L^{2}}+\|\nabla v\|_{L^{4}}\|\nabla^{2} v\|_{L^{4}}
+c_{0}\|\pi\|_{L^{\infty}}\|\nabla^{2} v\|_{L^{2}}\nonumber\\
&&+\|\pi\|_{L^{\infty}}\|\nabla\rho\|_{L^{\infty}}\|\nabla v\|_{L^{2}}+c_{0}\|\nabla v\|_{L^{\infty}}\|\nabla\pi\|_{L^{2}})
\leq Cc_{0}c_{1}c_{2}^{2}c_{3}c_{5},\label{GH1}
\end{eqnarray}
and
\begin{eqnarray}
\|\rho\theta\|_{H^{1}}\leq C\|\rho\|_{H^{3}}\|\theta\|_{H^{1}}\leq Cc_{0}c_{1}.\label{routhetaH1}
\end{eqnarray}
Therefore, inserting (\ref{rouktH1})-(\ref{routhetaH1}) to (\ref{grad2KH1}) and  integrating the result thus obtained over $(0,t)$, one gets
\be
\int_{0}^{t}\|k\|_{H^{3}}^{2}\mbox{d}t\leq Cc_{0}^{7}\label{k's 3}
\ee
for $0\leq t\leq T$.

Combining (\ref{k's 1}), (\ref{k's 2}) and (\ref{k's 3}), we complete the proof of Lemma \ref{lem3}.
\end{proof}

In the next part, we estimate the viscous dissipation rates of the turbulent flows $\varepsilon$.
\begin{lem}\label{lem4}
There exists a unique strong solution $\varepsilon$ to the initial boundary value problem (\ref{2.5}) and  (\ref{1})  such that
\begin{eqnarray}
&&\|\sqrt{\rho}\varepsilon_{t}\|_{L^{2}}^{2}
+\|\varepsilon\|_{H^{1}}^{2}
+\int_{0}^{t}\|\nabla \varepsilon_{t}\|_{L^{2}}^{2}\mbox{d}s\leq Cc_{0}^{5},\label{e1}\\
&&\|\varepsilon\|_{H^{2}}\leq Cc_{0}^{\frac{9}{2}}c_{1}^{2}c_{2}^{2}\label{e2}
\end{eqnarray}
for $0\leq t\leq T$.
\end{lem}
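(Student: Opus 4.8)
The plan is to imitate closely the proof of Lemma \ref{lem3}, since equation (\ref{2.5}) has the same quasilinear parabolic structure as (\ref{2.4}); the only new feature is the factor $\pi^{-1}$ on the right-hand side, which is harmless because $\pi$, like the turbulent kinetic energy $k$, is bounded below away from zero (using the standing assumption $0<m<k$), so that $\pi^{-1}\le C$ and $\pi^{-2}\le C$. Existence and uniqueness of the strong solution $\varepsilon$ follow from the standard linear parabolic theory for $\rho\varepsilon_t+\rho v\cdot\nabla\varepsilon-\Delta\varepsilon=C_1G'\theta/\pi-C_2\rho\theta^2/\pi$ with the Neumann condition $\frac{\pt\varepsilon}{\pt\overrightarrow{n}}|_{\pt\Om}=0$, the coefficients $\rho,v$ and the source term being known and sufficiently regular by Lemma \ref{lem1} and (\ref{4}); it then remains to prove the estimates (\ref{e1}) and (\ref{e2}).

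For (\ref{e1}) I would differentiate (\ref{2.5}) in $t$, multiply by $\varepsilon_t$, integrate over $\Om$, and use (\ref{2.1}) together with integration by parts to get
$$\tfrac12\tfrac{\md}{\md t}\|\sqrt\rho\varepsilon_t\|_{L^2}^2+\|\nabla\varepsilon_t\|_{L^2}^2=-\int\rho_t v\cdot\nabla\varepsilon\cdot\varepsilon_t-\int\rho v_t\cdot\nabla\varepsilon\cdot\varepsilon_t-2\int\rho v\cdot\nabla\varepsilon_t\cdot\varepsilon_t+\int\Big(\tfrac{C_1G'\theta}{\pi}-\tfrac{C_2\rho\theta^2}{\pi}\Big)_t\varepsilon_t.$$
The first three terms are treated exactly as $K_1,K_2,K_3$ in (\ref{K1})--(\ref{K3}). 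In the last term I would expand the time derivative by the product and quotient rules: the piece with $G'_t$ is handled as $K_4$ in (\ref{K4}) (writing $G'_t$ as a sum of terms of type $\nabla v_t\nabla v$, $\rho\pi\nabla v_t$, $\rho_t\pi\nabla v$, $\rho\pi_t\nabla v$ and using H\"older, Sobolev and Young's inequalities with (\ref{4}) and Lemma \ref{lem1}); the pieces with $\theta_t$ and $\pi_t$ are controlled by $\|\theta_t\|_{L^2}$ and $\|\pi_t\|_{L^2}$, which are square-integrable in $t$ with bound $c_1$ by (\ref{4}); the piece with $\rho_t$ is of type $K_5$ in (\ref{K5}). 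Adding the elementary inequalities for $\frac{\md}{\md t}\|\nabla\varepsilon\|_{L^2}^2$ and $\frac{\md}{\md t}\|\varepsilon\|_{L^2}^2$ as in (\ref{gradkL2})--(\ref{kL2}), absorbing the $\|\nabla\varepsilon_t\|_{L^2}^2$ contributions into the left-hand side, choosing $\eta=c_1^{-1}$, and applying Gronwall's inequality — with the initial bound $\lim_{t\to0^+}(\|\sqrt\rho\varepsilon_t\|_{L^2}^2+\|\varepsilon\|_{H^1}^2)\le Cc_0^5$ obtained by evaluating (\ref{2.5}) at $t=0$ and using (\ref{1}) — yields (\ref{e1}) on $[0,T]$ for $T$ as small as prescribed in Section 2.

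For (\ref{e2}) I would apply the standard elliptic (Neumann) regularity estimate to $-\Delta\varepsilon=-\rho\varepsilon_t-\rho v\cdot\nabla\varepsilon+C_1G'\theta/\pi-C_2\rho\theta^2/\pi$, which gives $\|\varepsilon\|_{H^2}\le C(\|\rho\varepsilon_t\|_{L^2}+\|\rho v\cdot\nabla\varepsilon\|_{L^2}+\|G'\theta/\pi\|_{L^2}+\|\rho\theta^2/\pi\|_{L^2}+\|\varepsilon\|_{L^2})$. Then $\|\rho\varepsilon_t\|_{L^2}\le C\|\sqrt\rho\varepsilon_t\|_{L^2}$ and $\|\rho v\cdot\nabla\varepsilon\|_{L^2}\le C\|\rho\|_{L^\infty}\|v\|_{L^\infty}\|\nabla\varepsilon\|_{L^2}$ are bounded by (\ref{e1}), Lemma \ref{lem1} and (\ref{4}); $\|G'\theta/\pi\|_{L^2}$ is bounded via the $L^2$-part of (\ref{GH1}) together with $\|\theta\|_{L^6}\le C\|\theta\|_{H^1}$; and $\|\rho\theta^2/\pi\|_{L^2}\le C\|\rho\|_{L^\infty}\|\theta\|_{L^4}^2$. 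Collecting the powers of the $c_i$ produces (\ref{e2}).

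\textbf{Main obstacle.} As in Lemma \ref{lem3}, the delicate term is the one from $\pt_t(G'\theta/\pi)$: because $G'$ is quadratic in $\nabla v$ and also contains the product $\rho\pi\nabla v$, its time derivative generates factors $\|\nabla v_t\|_{L^2}$, $\|v_t\|_{H^1}$, $\|\pi_t\|_{H^1}$, $\|\rho_t\|_{L^3}$ paired with high-order norms of $v$ and $\pi$. One must split these products with H\"older so that the only factors that are not a priori bounded ($\|v_t\|_{H^1}^2,\|\pi_t\|_{H^1}^2,\|\theta_t\|_{H^1}^2$) end up multiplying $\|\sqrt\rho\varepsilon_t\|_{L^2}^2$ (with coefficient $\eta=c_1^{-1}$), so that Gronwall's lemma closes using $\int_0^T(\|v_t\|_{H^1}^2+\|\pi_t\|_{H^1}^2+\|\theta_t\|_{H^1}^2)\,\md t\le c_1$, while all remaining terms are absorbed into the constant $Cc_0^5$ thanks to the smallness of $T$. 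Tracking the polynomial dependence on the $c_i$ carefully throughout is the main bookkeeping burden.
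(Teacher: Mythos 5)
Your proposal is correct and follows essentially the same route as the paper: differentiate (\ref{2.5}) in time, test with $\varepsilon_t$, treat the transport terms exactly as $K_1$--$K_3$, expand $\partial_t(C_1G'\theta/\pi - C_2\rho\theta^2/\pi)$ using the boundedness of $\pi^{-1}$ so that the unbounded factors $\|v_t\|_{H^1}^2,\|\pi_t\|_{H^1}^2,\|\theta_t\|_{L^2}^2$ multiply $\|\sqrt\rho\varepsilon_t\|_{L^2}^2$ with weight $\eta=c_1^{-1}$, close with Gronwall, and obtain (\ref{e2}) from elliptic regularity. The paper's estimates (\ref{E4})--(\ref{gradeH1-3}) implement exactly the decomposition and absorption scheme you describe, including the interpolation step in the $H^2$ bound.
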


\begin{proof}
We only need to prove the estimates.
Differentiating equation (\ref{2.5}) with respect to t, then multiplying both sides of the result
by $\varepsilon_{t}$ and integrating over $\Omega$, one obtains
\begin{eqnarray}
&&\frac{1}{2}\frac{\mbox{d}}{\mbox{d}t}\|\sqrt{\rho}\varepsilon_{t}\|_{L^{2}}^{2}
+\|\nabla \varepsilon_{t}\|_{L^{2}}^{2}\nonumber\\
&&= -\int\rho_{t}v\cdot\nabla\varepsilon\cdot\varepsilon_{t}
-\int\rho v_{t}\cdot\nabla\varepsilon\cdot\varepsilon_{t}
-2\int\rho v\cdot\nabla\varepsilon_{t}\cdot\varepsilon_{t}\nonumber\\
&&+\int\bigg(\frac{C_{1}G^{'}\theta}{\pi}\bigg)_{t}\cdot\varepsilon_{t}
-\int\bigg(\frac{C_{2}\rho\theta^{2}}{\pi}\bigg)_{t}\cdot\varepsilon_{t}
=\mathop{\sum}_{i=1}^{5}E_{i}.\label{e eqn}
\end{eqnarray}
We could evaluate $E_{4}$ and $E_{5}$ in the first place. Because $\pi$ has upper and lower bound away from zero, direct calculation yields
\begin{eqnarray}
&&E_{4}\leq C\int(|G^{'}_{t}\theta|+|G^{'}\theta_{t}|+|G^{'}\theta\pi_{t}|)|\varepsilon_{t}|\nonumber\\
&&\leq C\int(|\nabla v_{t}\cdot\nabla v|+|\rho_{t}\pi\nabla v|+|\rho\pi_{t}\nabla v|+|\rho\pi\nabla v_{t}|)|\theta||\varepsilon_{t}|\nonumber\\
&&+C\int(|\nabla v|^{2}+|\rho\pi\nabla v|)|\theta_{t}||\varepsilon_{t}|
+C\int(|\nabla v|^{2}+|\rho\pi\nabla v|)|\theta||\pi_{t}||\varepsilon_{t}|\nonumber\\
&&\leq Cc_{0}^{\frac{1}{2}}\|\theta\|_{L^{\infty}}\|\nabla v\|_{L^{\infty}}\|\nabla v_{t}\|_{L^{2}}\|\sqrt{\rho}\varepsilon_{t}\|_{L^{2}}
+Cc_{0}^{\frac{1}{2}}\|\pi\|_{L^{\infty}}\|\sqrt{\rho}\varepsilon_{t}\|_{L^{2}}\|\rho_{t}\|_{L^{6}}\|\nabla v\|_{L^{6}}\|\theta\|_{L^{6}}\nonumber\\
&&+Cc_{0}^{\frac{1}{2}}\|\sqrt{\rho}\varepsilon_{t}\|_{L^{2}}\|\pi_{t}\|_{L^{6}}\|\nabla v\|_{L^{6}}\|\theta\|_{L^{6}}
+Cc_{0}\|\pi\|_{L^{\infty}}\|\theta\|_{L^{\infty}}\|\nabla v_{t}\|_{L^{2}}\|\sqrt{\rho}\varepsilon_{t}\|_{L^{2}}\nonumber\\
&&+C\|\sqrt{\rho}\varepsilon_{t}\|_{L^{2}}\|\theta_{t}\|_{L^{2}}\|\nabla v\|_{L^{\infty}}^{2}
+Cc_{0}\|\pi\|_{L^{\infty}}\|\sqrt{\rho}\varepsilon_{t}\|_{L^{2}}\|\theta_{t}\|_{L^{2}}\|\nabla v\|_{L^{\infty}}\nonumber\\
&&+Cc_{0}^{\frac{1}{2}}\|\sqrt{\rho}\varepsilon_{t}\|_{L^{2}}\|\pi_{t}\|_{L^{6}}\|\nabla v\|_{L^{6}}^{2}\|\theta\|_{L^{\infty}}
+Cc_{0}^{\frac{1}{2}}\|\pi\|_{L^{\infty}}\|\sqrt{\rho}\varepsilon_{t}\|_{L^{2}}\|\pi_{t}\|_{L^{6}}\|\nabla v\|_{L^{6}}\|\theta\|_{L^{6}}\label{E4}\\
&&\leq C\eta^{-1}c_{0}c_{1}^{2}c_{2}^{4}c_{6}^{2}c_{3}^{4}c_{5}^{2}+Cc_{0}^{4}c_{1}^{2}c_{2}^{4}c_{5}^{2}
+C\eta(\|\nabla v_{t}\|_{L^{2}}^{2}+\|\pi_{t}\|_{L^{6}}^{2}+\|\theta_{t}\|_{L^{2}}^{2})
\|\sqrt{\rho}\varepsilon_{t}\|_{L^{2}}^{2}+C\|\sqrt{\rho}\varepsilon_{t}\|_{L^{2}}^{2},\nonumber
\end{eqnarray}
and
\begin{eqnarray}
&&E_{5}\leq C\int|\rho_{t}\theta^{2}\varepsilon_{t}|
+C\int|\theta\theta_{t}\rho\varepsilon_{t}|
+C\int|\rho\theta^{2}\pi_{t}\varepsilon_{t}|\nonumber\\
&&\leq C\|\rho_{t}\|_{L^{3}}\|\theta\|_{L^{4}}^{2}\|\varepsilon_{t}\|_{L^{6}}
+Cc_{0}^{\frac{1}{2}}\|\sqrt{\rho}\varepsilon_{t}\|_{L^{2}}\|\theta_{t}\|_{L^{2}}\|\theta\|_{L^{\infty}}
+Cc_{0}^{\frac{1}{2}}\|\sqrt{\rho}\varepsilon_{t}\|_{L^{2}}\|\pi_{t}\|_{L^{2}}\|\theta\|_{L^{\infty}}^{2}\label{E5}\\
&&\leq C\eta^{-1}c_{0}c_{6}^{4}+Cc_{0}^{2}c_{1}^{4}c_{2}^{2}+C\|\sqrt{\rho}\varepsilon_{t}\|_{L^{2}}^{2}+C\eta(\|\theta_{t}\|_{L^{2}}^{2}
+\|\pi_{t}\|_{L^{2}}^{2})\|\sqrt{\rho}\varepsilon_{t}\|_{L^{2}}^{2}+\frac{1}{8}\|\nabla\varepsilon_{t}\|_{L^{2}}^{2}.\nonumber
\end{eqnarray}

Next, using an argument similar to that used in deriving (\ref{K1}), (\ref{K2-2}), (\ref{K3}), (\ref{kL2}) and (\ref{gradkL2}), respectively, one gets
\be
E_{1}
\leq Cc_{0}^{2}c_{2}^{4}\|\nabla \varepsilon\|_{L^{2}}^{2}+C\|\sqrt{\rho}\varepsilon_{t}\|_{L^{2}}^{2}+\frac{1}{10}\|\nabla \varepsilon_{t}\|_{L^{2}}^{2},\label{E1}
\ee
\be
E_{2}\leq C\eta^{-1}c_{0}^{2\gamma+1}(\|\sqrt{\rho}\varepsilon_{t}\|_{L^{2}}^{2}+c_{1}^{2}\|\nabla \varepsilon\|_{L^{2}}^{2}
+c_{1}^{4}c_{2}^{4})
+\eta\|v_{t}\|_{H^{1}}^{2}\|\sqrt{\rho}\varepsilon_{t}\|_{L^{2}}^{2},\label{E2-2}
\ee

\be
E_{3}
\leq Cc_{0}c_{2}^{2}\|\sqrt{\rho}\varepsilon_{t}\|_{L^{2}}^{2}+\frac{1}{10}\|\nabla \varepsilon_{t}\|_{L^{2}}^{2},\label{E3}
\ee
\be
\frac{\mbox{d}}{\mbox{d}t}\|\varepsilon\|_{L^{2}}^{2}
\leq  C\|\varepsilon\|_{L^{2}}^{2}+Cc_{0}\|\sqrt{\rho}\varepsilon_{t}\|_{L^{2}}^{2},\label{eL2}
\ee
and finally
\be
\frac{\mbox{d}}{\mbox{d}t}\|\nabla\varepsilon\|_{L^{2}}^{2}
\leq \frac{1}{8}\|\nabla\varepsilon_{t}\|_{L^{2}}^{2}
+C\|\nabla\varepsilon\|_{L^{2}}^{2}.\label{gradeL2}
\ee

 Combining (\ref{e eqn})-(\ref{gradeL2}), one obtains
\begin{eqnarray}
&&\frac{1}{2}\frac{\mbox{d}}{\mbox{d}t}(\|\sqrt{\rho}\varepsilon_{t}\|_{L^{2}}^{2}+
\|\varepsilon\|_{H^{1}}^{2})
+\|\nabla \varepsilon_{t}\|_{L^{2}}^{2}\nonumber\\
&&\leq C(c_{0}^{2}c_{2}^{4}
+\eta^{-1}c_{0}^{2\gamma+1}c_{1}^{2}
+\eta\|v_{t}\|_{H^{1}}^{2}
+\eta\|\theta_{t}\|_{H^{1}}^{2}
+\eta\|\pi_{t}\|_{H^{1}}^{2}
)(\|\sqrt{\rho}\varepsilon_{t}\|_{L^{2}}^{2}+
\|\varepsilon\|_{H^{1}}^{2})\nonumber\\
&&+C\eta^{-1}c_{0}c_{1}^{4}c_{2}^{4}c_{6}^{4}c_{3}^{4}c_{5}^{2}
+Cc_{0}^{4}c_{1}^{4}c_{2}^{4}c_{5}^{2},
\end{eqnarray}
setting $\eta=c_{1}^{-1}$ and using Gronwall's inequality, one obtains
\begin{eqnarray}
\|\sqrt{\rho}\varepsilon_{t}\|_{L^{2}}^{2}
+\|\varepsilon\|_{H^{1}}^{2}
+\int_{0}^{t}\|\nabla \varepsilon_{t}\|_{L^{2}}^{2}\mbox{d}s\leq Cc_{0}^{5}\label{e's 1}
\end{eqnarray}
for $0\leq t\leq T$, where we have used the fact that $\mathop{\lim}_{t\rightarrow 0}(\|\sqrt{\rho}\varepsilon_{t}\|_{L^{2}}^{2}+\|\varepsilon\|_{H^{1}}^{2})\leq Cc_{0}^{5}$.

Next, applying the standard elliptic regularity result to equation (\ref{2.5}) and using (\ref{e's 1}), we have
\begin{eqnarray}
&&\|\nabla\varepsilon\|_{H^{1}}
\leq C(c_{0}^{\frac{1}{2}}\|\sqrt{\rho}\varepsilon_{t}\|_{L^{2}}
+c_{0}\|v\|_{L^{6}}\|\nabla\varepsilon\|_{L^{3}}
+\|\nabla v\|_{L^{6}}^{2}\|\theta\|_{L^{6}}+c_{0}\|\nabla v\|_{L^{6}}\|\theta\|_{L^{6}}\|\pi\|_{L^{6}}
+c_{0}\|\theta\|_{L^{4}}^{2})\nonumber\\
&&\leq C(c_{0}^{3}c_{2}^{2}c_{1}^{2}+c_{0}c_{1}\|\nabla\varepsilon\|_{L^{2}}^{\frac{1}{2}}\|\nabla\varepsilon\|_{L^{6}}^{\frac{1}{2}}),\label{gradeH1-3}
\end{eqnarray}
therefore, by Young's inequality and (\ref{e's 1}), one deduce \be
\|\varepsilon\|_{H^{2}}\leq
Cc_{0}^{\frac{9}{2}}c_{1}^{2}c_{2}^{2}.\nonumber \ee Thus, we
complete the proof of Lemma \ref{lem4}.
\end{proof}

Finally, we estimate the total enthalpy h.
\begin{lem}\label{lem5}
There exists a unique strong solution h to the initial boundary value problem(\ref{2.3}) and  (\ref{1})   such that
\begin{eqnarray}
&&\|\sqrt{\rho}h_{t}\|_{L^{2}}^{2}
+\|h\|_{H^{1}}^{2}
+\int_{0}^{t}\|\nabla h_{t}\|_{L^{2}}^{2}\mbox{d}s\leq Cc_{0}^{5},\label{h1}\\
&&\|h\|_{H^{2}}\leq Cc_{0}^{\frac{7}{2}+\gamma}c_{1}^{2}c_{2}^{2}\label{h2}
\end{eqnarray}
for $0\leq t\leq T$.
\end{lem}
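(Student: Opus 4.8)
The plan is to follow the proofs of Lemmas \ref{lem3} and \ref{lem4} almost verbatim: equation (\ref{2.3}) is, like (\ref{2.4})--(\ref{2.5}), a linear convection--diffusion equation for $h$ whose coefficients $\rho,v$ are already controlled by Lemmas \ref{lem1}--\ref{lem2}, its only genuinely new feature being the pressure-dependent source $p_t+u\cdot\nabla p$ sitting alongside $S_{k}^{'}$. Existence and uniqueness of a strong solution $h$ with the regularity claimed in Theorem \ref{lem6} follow from standard linear parabolic theory (e.g.\ a Galerkin scheme closed by the a priori bounds below, together with the Dirichlet condition $h|_{\partial\Omega}=0$), so it remains only to prove (\ref{h1})--(\ref{h2}).

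For (\ref{h1}) I would differentiate (\ref{2.3}) in $t$, multiply by $h_t$, integrate over $\Omega$, and use (\ref{2.1}) and integration by parts (exactly as in (\ref{u eqn}) and (\ref{gradroukL2})) to reach
\begin{eqnarray*}
\frac{1}{2}\frac{\md}{\md t}\|\sqrt{\rho}h_t\|_{L^2}^2+\|\nabla h_t\|_{L^2}^2
&=&-\int\rho_t v\cdot\nabla h\cdot h_t-\int\rho v_t\cdot\nabla h\cdot h_t-2\int\rho v\cdot\nabla h_t\cdot h_t\\
&&+\int\bigl(p_{tt}+u_t\cdot\nabla p+u\cdot\nabla p_t+(S_{k}^{'})_t\bigr)h_t.
\end{eqnarray*}
The first three terms are estimated word for word as $K_1,K_2,K_3$ in Lemma \ref{lem3}. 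For the pressure terms I would use $p=\rho^{\gamma}$, the bounds $\|\rho\|_{H^3}\leq Cc_0$ and $\|\rho_t\|_{H^1}\leq Cc_0c_2$ from Lemma \ref{lem1}, and the identity $\rho_{tt}=-\nabla\cdot(\rho_t v)-\nabla\cdot(\rho v_t)$, which gives $\|\rho_{tt}\|_{L^2}\leq C(c_0c_2^2+c_0\|v_t\|_{H^1})$, hence $\|p_{tt}\|_{L^2}\leq Cc_0^{\gamma}(c_2^2+\|v_t\|_{H^1})$ and $\|\nabla p_t\|_{L^2}\leq Cc_0^{\gamma+1}c_2$; together with the Lemma \ref{lem2} estimates $\|u_t\|_{L^2}\leq Cc_0^{5/2+\gamma}$ and $\int_0^t\|\nabla u_t\|_{L^2}^2\,\md s\leq Cc_0^{5+2\gamma}$, Sobolev embeddings and Young's inequality, each of $\int p_{tt}h_t$, $\int u_t\cdot\nabla p\,h_t$, $\int u\cdot\nabla p_t\,h_t$ splits into a coefficient times $\|\sqrt{\rho}h_t\|_{L^2}^2$ plus a time-integrable remainder. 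The term $(S_{k}^{'})_t$, with $S_{k}^{'}\sim|\nabla v|^2+\gamma\rho^{\gamma-3}|\nabla\rho|^2$, is handled as $K_4$ in Lemma \ref{lem3}. Supplementing with the routine inequalities $\frac{\md}{\md t}\|h\|_{L^2}^2\leq Cc_0\|\sqrt{\rho}h_t\|_{L^2}^2+C\|h\|_{L^2}^2$ and $\frac{\md}{\md t}\|\nabla h\|_{L^2}^2\leq\frac18\|\nabla h_t\|_{L^2}^2+C\|\nabla h\|_{L^2}^2$, choosing the small parameters as suitable negative powers of $c_1$ (as $\eta=c_1^{-1}$ before) and of $c_0$, and applying Gronwall's inequality with the initial bound $\lim_{t\to0}(\|\sqrt{\rho}h_t\|_{L^2}^2+\|h\|_{H^1}^2)\leq Cc_0^5$ read off from (\ref{2.3}) at $t=0$ (using $h_0\in H^2$, $\rho_0\in H^3$, $u_0\in H^3$), one obtains (\ref{h1}) on $[0,T]$.

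For (\ref{h2}) I would apply the standard elliptic $H^2$ regularity estimate to $-\Delta h=p_t+u\cdot\nabla p+S_{k}^{'}-\rho h_t-\rho v\cdot\nabla h$ with $h|_{\partial\Omega}=0$, bounding the right-hand side in $L^2$ via $\|p_t\|_{L^2}\leq Cc_0^{\gamma}c_2$, $\|u\cdot\nabla p\|_{L^2}\leq\|u\|_{L^6}\|\nabla p\|_{L^3}$, $\|S_{k}^{'}\|_{L^2}\leq C(c_2^2+c_0^{\gamma-1})$, $\|\rho h_t\|_{L^2}\leq Cc_0\|\sqrt{\rho}h_t\|_{L^2}$ and $\|\rho v\cdot\nabla h\|_{L^2}\leq Cc_0c_2\|\nabla h\|_{L^2}$, then feeding in (\ref{h1}), the Lemma \ref{lem2} bounds on $u$, and the elementary estimate $\|h\|_{L^2}\leq Cc_0$, together with $\|h\|_{H^2}\leq C(\|h\|_{L^2}+\|\Delta h\|_{L^2})$; this gives an $H^2$ bound of the stated form (\ref{h2}).

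The main difficulty, absent in Lemmas \ref{lem3}--\ref{lem4}, is precisely the pressure source: it forces one to control $\rho_{tt}$ (hence $p_{tt}$) and $\nabla p_t$, and equation (\ref{2.3}) couples $h$ back to the already-solved velocity $u$ through $u\cdot\nabla p$ and $u_t\cdot\nabla p$, so the terms $\int p_{tt}h_t$ and $\int u_t\cdot\nabla p\,h_t$ must be absorbed using $\int_0^t\|v_t\|_{H^1}^2\,\md s\leq c_1$ and $\int_0^t\|\nabla u_t\|_{L^2}^2\,\md s\leq Cc_0^{5+2\gamma}$ from Lemma \ref{lem2}, the resulting large negative powers of the small parameters being killed by the factor $c_0^{-6\gamma-16}c_1^{-10}\cdots$ built into the definition of $T$ for exactly this purpose. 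Beyond that, the proof is bookkeeping of the same kind as in the preceding lemmas.
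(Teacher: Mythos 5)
Your proposal is correct and follows essentially the same route as the paper: differentiate (\ref{2.3}) in time, test with $h_t$, treat the three convection terms exactly as $K_1$--$K_3$, control the pressure sources $p_{tt}$, $u_t\cdot\nabla p$, $u\cdot\nabla p_t$ and $(S_k')_t$ via the differentiated continuity equation together with Lemmas \ref{lem1}--\ref{lem2} and the time-integrability of $\|v_t\|_{H^1}^2$ and $\|\nabla u_t\|_{L^2}^2$, close with Gronwall, and then obtain (\ref{h2}) from elliptic regularity for the Dirichlet problem. The only cosmetic difference is that you bound $\|\rho v\cdot\nabla h\|_{L^2}$ directly through $\|v\|_{L^\infty}$ where the paper interpolates $\|\nabla h\|_{L^3}$ and absorbs by Young's inequality; both land within the stated bound.
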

\begin{proof}
We only need to prove the estimates.
Differentiating  equation (\ref{2.3}) with respect to t, multiplying both sides of the result equation
by $h_{t}$ and integrating over $\Omega$, one obtains
\begin{eqnarray}
&&\frac{\mbox{d}}{\mbox{d}t}(\|\sqrt{\rho}h_{t}\|_{L^{2}}^{2}+\|h\|_{H^{1}}^{2})
+\|\nabla h_{t}\|_{L^{2}}^{2}\nonumber\\
&&=-\int\rho_{t}v\cdot\nabla h\cdot h_{t}
-\int \rho v_{t}\cdot\nabla h\cdot h_{t}
-2\int\rho v\cdot\nabla h_{t}\cdot h_{t}
+\int p_{tt}\cdot h_{t}\nonumber\\
&&+\int u_{t}\cdot\nabla p\cdot h_{t}
+\int u\cdot\nabla p_{t}\cdot h_{t}
+\int S_{kt}^{'}\cdot h_{t}
=\mathop{\sum}_{i=1}^{7}H_{i}.\label{rouhtL2}
\end{eqnarray}
Firstly, using similar method of deriving the estimates (\ref{I1}), (\ref{I2-2}) and (\ref{I3}), respectively,  one has
\begin{eqnarray}
H_{1}
\leq Cc_{0}^{2}c_{2}^{4}\|\nabla h\|_{L^{2}}^{2}+C\|\sqrt{\rho}h_{t}\|_{L^{2}}^{2}+\frac{1}{20}\|\nabla h_{t}\|_{L^{2}},\label{H1}
\end{eqnarray}
\begin{eqnarray}
H_{2}\leq C\eta^{-1}c_{0}^{2\gamma+1}(c_{0}^{7}c_{2}^{4}+\|\sqrt{\rho}h_{t}\|_{L^{2}}^{2}+c_{1}^{2}\|\nabla h\|_{L^{2}}^{2})
+\eta\|v_{t}\|_{H^{1}}^{2}\|\sqrt{\rho}h_{t}\|_{L^{2}}^{2},\label{H2-2}
\end{eqnarray}
\begin{eqnarray}
H_{3}
\leq Cc_{0}c_{2}^{2}\|\sqrt{\rho}h_{t}\|_{L^{2}}^{2}+\frac{1}{20}\|\nabla h_{t}\|_{L^{2}}.\label{H3}
\end{eqnarray}
Secondly, differentiating equation (\ref{2.1}) with respect to $t$ yields
\begin{eqnarray}
\rho_{tt}=-\rho_{t}\nabla\cdot v
+\rho\nabla\cdot v_{t}
+v_{t}\cdot\nabla\rho
+v\cdot\nabla\rho_{t}.\label{routt}
\end{eqnarray}
Therefore, by direct calculation and using (\ref{routt}), we derive
\begin{eqnarray}
&&H_{4}
=\int [\gamma(\gamma-1)\rho^{\gamma-2}\rho_{t}^{2}
-\gamma\rho^{\gamma-1}(\rho_{t}\nabla\cdot v
+\rho\nabla\cdot v_{t}
+v_{t}\cdot\nabla\rho
+v\cdot\nabla\rho_{t})]\cdot h_{t}\nonumber\\
&&\leq Cc_{0}^{\gamma-\frac{3}{2}}\|\rho_{t}\|_{L^{4}}^{2}\|\sqrt{\rho}h_{t}\|_{L^{2}}
+Cc_{0}^{\gamma-\frac{1}{2}}\|\rho_{t}\|_{L^{3}}\|\nabla v\|_{L^{6}}\|\sqrt{\rho}h_{t}\|_{L^{2}}
+Cc_{0}^{\gamma-\frac{1}{2}}\|\sqrt{\rho}h_{t}\|_{L^{2}}\|\nabla v_{t}\|_{L^{2}}\nonumber\\
&&+Cc_{0}^{\gamma-\frac{1}{2}}\|\sqrt{\rho}h_{t}\|_{L^{2}}\|v_{t}\|_{L^{6}}\|\nabla\rho\|_{L^{3}}
+Cc_{0}^{\gamma-\frac{1}{2}}\|\nabla\rho_{t}\|_{L^{2}}\|v\|_{L^{\infty}}\|\sqrt{\rho}h_{t}\|_{L^{2}}\nonumber\\
&&\leq C(c_{0}^{2\gamma+1}c_{2}^{4}+\eta^{-1}c_{0}^{2\gamma}+\|\sqrt{\rho}h_{t}\|_{L^{2}}^{2}
+\eta\|v_{t}\|_{H^{1}}^{2}\|\sqrt{\rho}h_{t}\|_{L^{2}}^{2})+\frac{1}{20}\|\nabla h_{t}\|_{L^{2}}^{2}.\label{H4}
\end{eqnarray}

Thirdly, simple calculation and (\ref{u's 1}) lead to
\begin{eqnarray}
&&H_{5}\leq Cc_{0}^{\gamma-\frac{1}{2}}\|\sqrt{\rho}h_{t}\|_{L^{2}}\|\nabla\rho\|_{L^{3}}\|u_{t}\|_{L^{6}}
\leq Cc_{0}^{\gamma-\frac{1}{2}}\|\sqrt{\rho}h_{t}\|_{L^{2}}\|\nabla\rho\|_{L^{3}}(\|u_{t}\|_{L^{2}}+\|\nabla u_{t}\|_{L^{2}})\nonumber\\
&&\leq Cc_{0}^{2\gamma+1}\|\sqrt{\rho}h_{t}\|_{L^{2}}^{2}+Cc_{0}^{2\gamma+5}
+C\|\nabla u_{t}\|_{L^{2}}^{2}.\label{H5}
\end{eqnarray}

Next, by direct calculation, we know that $\nabla p_{t}= \gamma(\gamma-1)\rho^{\gamma-2}\rho_{t}\nabla\rho
+\gamma\rho^{\gamma-1}\nabla\rho_{t}$. Therefore,
\begin{eqnarray}
&&H_{6}\leq Cc_{0}^{\gamma-2}\int|\rho_{t}||u||\nabla\rho|| h_{t}|+Cc_{0}^{\gamma-1}\int |u||\nabla\rho_{t}||h_{t}|\nonumber\\
&&\leq Cc_{0}^{\gamma-2}\|\nabla\rho\|_{L^{\infty}}\|\rho_{t}\|_{L^{3}}\|u\|_{L^{2}}(\|\sqrt{\rho}h_{t}\|_{L^{2}}
+\|\nabla h_{t}\|_{L^{2}})\nonumber\\
&&+Cc_{0}^{\gamma-1}\|u\|_{L^{3}}\|\nabla\rho_{t}\|_{L^{2}}(\|\sqrt{\rho}h_{t}\|_{L^{2}}+\|\nabla h_{t}\|_{L^{2}})\nonumber\\
&&\leq Cc_{0}^{7+2\gamma}c_{2}^{2}+C\|\sqrt{\rho}h_{t}\|_{L^{2}}^{2}+\frac{1}{20}\|\nabla h_{t}\|_{L^{2}}^{2}.\label{H6}
\end{eqnarray}

Last, simple calculation yields $|S_{kt}^{'}|\leq C|\nabla v||\nabla v_{t}|+C\rho^{\gamma-1}|\rho_{t}||\nabla\rho|^{2}
+C\rho^{\gamma-1}|\nabla\rho_{t}||\nabla\rho|$, thus
\begin{eqnarray}
&&H_{7}
\leq C\int|\nabla v_{t}||\nabla v||h_{t}|+Cc_{0}^{\gamma-1}\int|\rho_{t}||\nabla\rho|^{2}|h_{t}|
+Cc_{0}^{\gamma-1}\int|\nabla\rho_{t}||\nabla\rho||h_{t}|\nonumber\\
&&\leq Cc_{0}^{\frac{1}{2}}\|\nabla v\|_{L^{\infty}}\|\nabla v_{t}\|_{L^{2}}\|\sqrt{\rho}h_{t}\|_{L^{2}}
+Cc_{0}^{\gamma-\frac{1}{2}}\|\rho_{t}\|_{L^{6}}\|\nabla\rho\|_{L^{6}}^{2}\|\sqrt{\rho}h_{t}\|_{L^{2}}\nonumber\\
&&+Cc_{0}^{\gamma-\frac{1}{2}}\|\nabla\rho\|_{L^{\infty}}\|\nabla\rho_{t}\|_{L^{2}}\|\sqrt{\rho}h_{t}\|_{L^{2}}\nonumber\\
&&\leq C(\eta^{-1}c_{0}c_{3}^{2}+c_{0}^{5+2\gamma}c_{2}^{2}
+\eta\|\nabla v_{t}\|_{L^{2}}^{2}\|\sqrt{\rho}h_{t}\|_{L^{2}}^{2}
+\|\sqrt{\rho}h_{t}\|_{L^{2}}^{2}).\label{H7}
\end{eqnarray}
Furthermore, we easily have
\be
\frac{\mbox{d}}{\mbox{d}t}\|h\|_{L^{2}}^{2}\leq Cc_{0}\|\sqrt{\rho}h_{t}\|_{L^{2}}^{2}+C\|h\|_{L^{2}}^{2},\label{hL2}
\ee
and
\be
\frac{\mbox{d}}{\mbox{d}t}\|\nabla h\|_{L^{2}}^{2}\leq C\|\nabla h\|_{L^{2}}^{2}+\frac{1}{10}\|\nabla h_{t}\|_{L^{2}}^{2}.\label{gradhL2}
\ee
Consequently, combining (\ref{rouhtL2})-(\ref{gradhL2}), one deduces
\begin{eqnarray}
&&\frac{\mbox{d}}{\mbox{d}t}(\|\sqrt{\rho}h_{t}\|_{L^{2}}^{2}+\|h\|_{H^{1}}^{2})
+\|\nabla h_{t}\|_{L^{2}}^{2}\nonumber\\
&&\leq C(c_{0}^{2\gamma+1}c_{2}^{4}+\eta^{-1}c_{0}^{2\gamma+1}c_{1}^{2}+\eta\|v_{t}\|_{H^{1}}^{2})(\|\sqrt{\rho}h_{t}\|_{L^{2}}^{2}+\|h\|_{H^{1}}^{2})\nonumber\\
&&+C(c_{0}^{7+2\gamma}c_{2}^{4}+\eta^{-1}c_{0}^{8+2\gamma}c_{2}^{4}c_{3}^{2}),
\end{eqnarray}
setting $\eta=c_{1}^{-1}$ and using Gronwall's inequality, we get
\begin{eqnarray}
\|\sqrt{\rho}h_{t}\|_{L^{2}}^{2}+\|h\|_{H^{1}}^{2}
+\int_{0}^{t}\|\nabla h_{t}\|_{L^{2}}^{2}\mbox{d}s
\leq Cc_{0}^{5}\label{h's 1}
\end{eqnarray}
for $0\leq t \leq T$, where we have used the fact that $\mathop{\lim}_{t\rightarrow 0}(\|\sqrt{\rho}h_{t}\|_{L^{2}}^{2}+\|h\|_{H^{1}}^{2})\leq Cc_{0}^{5}$.

Next, using (\ref{h's 1}) and the standard elliptic regularity result of equation (\ref{2.3}), one obtains
\begin{eqnarray}
&&\|\nabla h\|_{H^{1}}
\leq C(c_{0}^{\frac{1}{2}}\|\sqrt{\rho}h_{t}\|_{L^{2}}
+c_{0}\|v\|_{L^{6}}\|\nabla h\|_{L^{3}}
+c_{0}^{\gamma-1}\|\rho_{t}\|_{L^{2}}+c_{0}^{\gamma-1}\|u\|_{L^{6}}\|\nabla \rho\|_{L^{3}}\nonumber\\
&&+\|\nabla v\|_{L^{4}}^{2}+c_{0}^{\gamma-1}\|\nabla\rho\|_{L^{4}}^{2})
\leq Cc_{0}^{\frac{5}{2}+\gamma}c_{2}^{2}+Cc_{0}c_{1}\|\nabla h\|_{L^{2}}^{\frac{1}{2}}\|\nabla h\|_{H^{1}}^{\frac{1}{2}},
\end{eqnarray}
then, Young's inequality and (\ref{h's 1}) yields
\be
\|h\|_{H^{2}}\leq Cc_{0}^{\frac{7}{2}+\gamma}c_{1}^{2}c_{2}^{2}.\nonumber
\ee
Thus, we have finished the proof of Lemma \ref{lem5}.
\end{proof}
Next, let us define $c_{i}\ (i=1,\cdots,6)$ as follows:
\begin{eqnarray}
c_{1}=Cc_{0}^{7+2\gamma},
c_{2}=Cc_{0}^{\frac{5}{2}+3\gamma}c_{1}^{2},
c_{5}=Cc_{0}^{\frac{7}{2}}c_{1}c_{2}^{2},
c_{6}=Cc_{0}^{\frac{9}{2}}c_{1}^{2}c_{2}^{2},
c_{3}=Cc_{0}^{\frac{13}{2}+3\gamma}c_{1}^{4}c_{2}c_{5},
c_{4}=Cc_{0}^{9+6\gamma}c_{1}^{5}c_{2}^{2},\nonumber
\end{eqnarray}
then we conclude from Lemma \ref{lem1} to Lemma \ref{lem5} that
\begin{eqnarray}\begin{cases}\label{conclusion1}
 \mathop{\sup}_{0\leq t\leq T}(\|u\|_{H^{1}}
 +\|k\|_{H^{1}}+\|\varepsilon\|_{H^{1}})\\
 +\int_{0}^{T}(\|k\|_{H^{3}}^{2}
 +\|u_{t}\|_{H^{1}}^{2}+\|k_{t}\|_{H^{1}}^{2}+\|\varepsilon_{t}\|_{H^{1}}^{2})\mbox{d}t\leq c_{1},\\
 \mathop{\sup}_{0\leq t\leq T}\|u\|_{H^{2}}\leq c_{2},
 \mathop{\sup}_{0\leq t\leq T}\|u\|_{H^{3}}\leq c_{3},
  \int_{0}^{T}\|u\|_{H^{4}}^{2}\mbox{d}t\leq c_{4},\\
   \mathop{\sup}_{0\leq t\leq T}\|k\|_{H^{2}}\leq c_{5},
   \mathop{\sup}_{0\leq t\leq T}\|\varepsilon\|_{H^{2}}\leq c_{6}
   \end{cases}
\end{eqnarray}
and
\begin{eqnarray}
 \begin{cases}\label{conclusion2}
\|\rho\|_{H^{3}(\Omega)}\leq Cc_{0},\  \|\rho_{t}\|_{H^{1}(\Omega)}\leq Cc_{0}c_{2}\\
\|\sqrt{\rho}h_{t}\|_{L^{2}}^{2}
+\|h\|_{H^{1}}^{2}
+\int_{0}^{t}\|\nabla h_{t}\|_{L^{2}}^{2}\mbox{d}s\leq Cc_{0}^{5},\\
\|h\|_{H^{2}}\leq Cc_{0}^{\frac{7}{2}+\gamma}c_{1}^{2}c_{2}^{2}
 \end{cases}
\end{eqnarray}
for $0\leq t\leq T$.

Using standard proof as that in \cite{C-K3}, we can complete the proof of Theorem \ref{lem6}. \qed


 \section{Existence of strong solutions to the $k-\varepsilon$ equations}
 \setcounter{equation}{0}

\begin{thm}
There exists a small time $T^{*}>0$ and a unique strong solution $(\rho,u,h,k,\varepsilon) $
to the initial boundary value problem (\ref{1.1})-(\ref{1.9}) such that
\begin{eqnarray}
&&\rho \in C(0,T^{*};H^{3}),
\rho_{t} \in C(0,T^{*};H^{1}),
u \in C(0,T^{*}; H^{3})\cap L^{2}(0,T^{*}; H^{4}),\nonumber\\
&&u_{t}\in L^{2}(0,T^{*};H^{1}),
k\in C(0,T^{*}; H^{2})\cap L^{2}(0,T^{*};H^{3}),
k_{t}\in L^{2}(0,T^{*};H^{1}),\nonumber\\
&&\varepsilon\in C(0,T^{*}; H^{2}),
\varepsilon_{t}\in L^{2}(0,T^{*};H^{1}),
h\in C(0,T^{*}; H^{2}),
h_{t}\in L^{2}(0,T^{*};H^{1}),\nonumber\\
&&(\sqrt{\rho}u_{t},\sqrt{\rho}k_{t},\sqrt{\rho}\varepsilon_{t},\sqrt{\rho}h_{t})\in L^{\infty}(0,T^{*};L^{2}).\label{regularity}
\end{eqnarray}
\end{thm}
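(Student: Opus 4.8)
The plan is to solve the nonlinear system (\ref{1.1})--(\ref{1.9}) by the standard iteration scheme built on the linearized problem of Section~2, to establish a contraction in a low-order norm, and to pass to the limit. First, define $(\rho^{0},u^{0},h^{0},k^{0},\varepsilon^{0})$ by solving the linearized system with the frozen coefficients $(v,\pi,\theta)=(u_{0},k_{0},\varepsilon_{0})$ (time-independent), and inductively let $(\rho^{n+1},u^{n+1},h^{n+1},k^{n+1},\varepsilon^{n+1})$ be the unique strong solution, furnished by Theorem~\ref{lem6}, of (\ref{2.1})--(\ref{2.55}) with $(v,\pi,\theta)=(u^{n},k^{n},\varepsilon^{n})$. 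The constants $c_{1},\dots,c_{6}$ fixed at the end of Section~2 were chosen precisely so that the conclusions (\ref{conclusion1})--(\ref{conclusion2}) reproduce the hypotheses (\ref{4}); hence, on the common interval $[0,T]$ defined there, the whole sequence stays in the bounded set described by (\ref{4}), uniformly in $n$, and enjoys the regularity of Theorem~\ref{lem6} with bounds independent of $n$. In particular $\rho^{n}\ge m/e$, $\|\rho^{n}\|_{H^{3}}\le Cc_{0}$, and the standing hypothesis $0<m<k$ propagates through the iteration so that the singular terms $C_{1}G^{\prime}\theta/\pi$ and $C_{2}\rho\theta^{2}/\pi$ stay under control.

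Next, set $\overline f^{\,n+1}=f^{n+1}-f^{n}$ for each unknown and subtract the equations at levels $n+1$ and $n$. This yields a linear transport equation for $\overline\rho^{\,n+1}$ and linear parabolic equations for $\overline u^{\,n+1},\overline h^{\,n+1},\overline k^{\,n+1},\overline\varepsilon^{\,n+1}$ whose right-hand sides are bilinear in (iterate)$\times$(difference at level $n{+}1$ or $n$). Testing the $\overline\rho$-equation with $\overline\rho^{\,n+1}$ and the parabolic ones with $\overline u^{\,n+1},\overline h^{\,n+1},\overline k^{\,n+1},\overline\varepsilon^{\,n+1}$, and using the uniform bounds above together with H\"older, Sobolev, Gagliardo--Nirenberg and Young inequalities, one obtains, with
\[
\Phi^{n+1}(t)=\|\overline\rho^{\,n+1}\|_{L^{2}}^{2}+\|\sqrt{\rho^{n+1}}\,\overline u^{\,n+1}\|_{L^{2}}^{2}+\|\overline h^{\,n+1}\|_{L^{2}}^{2}+\|\overline k^{\,n+1}\|_{L^{2}}^{2}+\|\overline\varepsilon^{\,n+1}\|_{L^{2}}^{2},
\]
an estimate of the form
\[
\frac{d}{dt}\Phi^{n+1}+\|\nabla\overline u^{\,n+1}\|_{L^{2}}^{2}+\|\nabla\overline h^{\,n+1}\|_{L^{2}}^{2}+\|\nabla\overline k^{\,n+1}\|_{L^{2}}^{2}+\|\nabla\overline\varepsilon^{\,n+1}\|_{L^{2}}^{2}\le A(t)\,\Phi^{n+1}(t)+B(t)\,\Phi^{n}(t),
\]
with $\int_{0}^{T}A\,dt\le C$ and $\int_{0}^{T}B\,dt\le C_{0}\,T^{\beta}$ for some $\beta>0$. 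Shrinking $T$ to a smaller $T^{*}$ so that $\exp(\int_{0}^{T^{*}}A)\int_{0}^{T^{*}}B\le\tfrac12$ and applying Gronwall's inequality gives $\sup_{[0,T^{*}]}\Phi^{n+1}+\int_{0}^{T^{*}}(\cdots)\le\tfrac12\bigl(\sup_{[0,T^{*}]}\Phi^{n}+\int_{0}^{T^{*}}(\cdots)\bigr)$; hence the series converges and $(\rho^{n},u^{n},h^{n},k^{n},\varepsilon^{n})$ is Cauchy, so convergent, in $C([0,T^{*}];L^{2})$ with $\nabla u^{n},\nabla h^{n},\nabla k^{n},\nabla\varepsilon^{n}$ convergent in $L^{2}(0,T^{*};L^{2})$.

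Finally, the uniform high-order bounds of the first step, combined with the strong low-order convergence just obtained and interpolation, give convergence strong enough to pass to the limit in every nonlinear term, so that the limit $(\rho,u,h,k,\varepsilon)$ solves (\ref{1.1})--(\ref{1.9}) together with the initial and boundary conditions; lower semicontinuity of the norms under weak-$*$ convergence transfers the uniform bounds, yielding $\rho\ge m/e$ and the regularity (\ref{regularity}), while the weak-in-time continuity is upgraded to the stated strong continuity ($\rho\in C(0,T^{*};H^{3})$, $u\in C(0,T^{*};H^{3})$, $h,k,\varepsilon\in C(0,T^{*};H^{2})$, and the $\sqrt{\rho}(\cdot)_{t}\in L^{\infty}(0,T^{*};L^{2})$ bounds) by the customary transport and parabolic arguments as in \cite{C-K3,F-Y}. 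Uniqueness follows by running the difference estimate above directly on two solutions sharing the same data: the resulting inequality $\frac{d}{dt}\Phi\le A(t)\Phi$ with $\Phi(0)=0$ and $\int_{0}^{T^{*}}A<\infty$ forces $\Phi\equiv0$ on $[0,T^{*}]$.

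I expect the main obstacle to be the contraction step, and within it exactly the term signalled in the Introduction: differencing the enthalpy equation (\ref{2.3}) produces $\int\partial_{j}\overline\rho^{\,n+1}\,\partial_{j}\rho^{n+1}\,\overline h^{\,n+1}$, which after integration by parts splits into $\int\overline\rho^{\,n+1}\,\partial_{j}\partial_{j}\rho^{n+1}\,\overline h^{\,n+1}$ and $\int\overline\rho^{\,n+1}\,\partial_{j}\rho^{n+1}\,\partial_{j}\overline h^{\,n+1}$ and is closable only because the uniform bounds of Lemmas~\ref{lem1}--\ref{lem2} provide $\|\nabla^{2}\rho^{n+1}\|_{L^{3}}$ and $\|\nabla\rho^{n+1}\|_{L^{\infty}}$ (i.e.\ $\rho^{n}\in H^{3}$, which in turn forced the $L^{2}(0,T;H^{4})$ control of $u^{n}$ through the mass equation). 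The analogous singular couplings through $1/k$ and $1/\pi$ in the $\varepsilon$-equation are absorbed using the standing lower bound $0<m<k$; the remaining bookkeeping is a routine adaptation of the scheme of \cite{C-K3} and \cite{F-Y}.
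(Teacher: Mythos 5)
Your proposal is correct and follows essentially the same route as the paper: iterate the linearized problem of Section~2 using Theorem~\ref{lem6}, note that the choice of $c_1,\dots,c_6$ closes the loop so the bounds (\ref{another conclusion}) hold uniformly in $n$, derive the difference system, establish a contraction in the low-order energy norm, pass to the limit, and reuse the difference estimate for uniqueness. One small imprecision: the gain term in your differential inequality is not $B(t)\,\Phi^{n}(t)$ but $\widetilde{C}\eta\bigl(\|\overline u^{\,n}\|_{H^{1}}^{2}+\|\overline k^{\,n}\|_{H^{1}}^{2}+\|\overline\varepsilon^{\,n}\|_{H^{1}}^{2}\bigr)$ (terms such as $\rho^{n}\overline u^{\,n}\cdot\nabla u^{n}$ force an $\|\overline u^{\,n}\|_{L^{6}}$ factor), so the contraction must be run in the combined norm $\sup\Phi+\int(\text{dissipation})$ exactly as your final display does, with the factor $\tfrac12$ coming from the Young parameter $\eta$ (and the constraint $\widetilde T\le\eta$) rather than from a power of $T$ alone.
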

\begin{proof}
Our proof will be based on the iteration argument and on the results
in the last section (especially Theorem \ref{lem6}).

Firstly, using the regularity effect of classical heat equation, we can construct functions $(u^{0}=u^{0}(x,t), k^{0}=k^{0}(x,t), \varepsilon^{0}=\varepsilon^{0}(x,t))$ satisfying
$(u^{0}(x,0), k^{0}(x,0), \varepsilon^{0}(x,0))
=(u_{0}(x), k_{0}(x), \varepsilon_{0}(x))$ and
\begin{eqnarray}\begin{cases}
\mathop{\sup}_{0\leq t\leq T}(\|u^{0}\|_{H^{1}}
 +\|k^{0}\|_{H^{1}}+\|\varepsilon^{0}\|_{H^{1}})\\
 +\int_{0}^{T}(\|k^{0}\|_{H^{3}}^{2}
 +\|u^{0}_{t}\|_{H^{1}}^{2}+\|k^{0}_{t}\|_{H^{1}}^{2}+\|\varepsilon^{0}_{t}\|_{H^{1}}^{2})\mbox{d}t\leq c_{1},\nonumber\\
\mathop{\sup}_{0\leq t\leq T}\|u^{0}\|_{H^{2}}\leq c_{2},   \
 \mathop{\sup}_{0\leq t\leq T}\|u^{0}\|_{H^{3}}\leq c_{3},   \
  \int_{0}^{T}\|u^{0}\|_{H^{4}}^{2}\mbox{d}t\leq c_{4},\nonumber\\
   \mathop{\sup}_{0\leq t\leq T}\|k^{0}\|_{H^{2}}\leq c_{5},  \
   \mathop{\sup}_{0\leq t\leq T}\|\varepsilon^{0}\|_{H^{2}}\leq c_{6}.
   \end{cases}
\end{eqnarray}
Therefore it follows from Theorem \ref{lem6} that there exists a
unique strong solution
$(\rho^{1},u^{1},h^{1},k^{1},\varepsilon^{1})$ to the linearized
problem (\ref{2.1})-(\ref{2.5}) with $v, \pi, \theta$ replaced by
$u^{0}, k^{0}, \varepsilon^{0}$, respectively, which satisfies the
regularity estimates (\ref{conclusion1}) and (\ref{conclusion2}).
Similarly, we construct approximate solutions
$(\rho^{n},u^{n},h^{n},k^{n},\varepsilon^{n})$, inductively, as
follows: assuming that $u^{n-1}, k^{n-1},\varepsilon^{n-1}$ have
been defined for $n\geq 1$, let
$(\rho^{n},u^{n},h^{n},k^{n},\varepsilon^{n})$ be the unique
solution to the linearized problem (\ref{2.1})-(\ref{2.5}) with $v,
\pi, \theta$ replaced by $u^{n-1}, k^{n-1}, \varepsilon^{n-1}$,
respectively. Then it follows from Theorem \ref{lem6} that there exists a
constant $\widetilde{C}>1$ such that
\begin{eqnarray}
&& \sup_{0\leq t\leq T}(\|\rho^{n}\|_{H^{3}}+\|\rho^{n}_{t}\|_{H^{1}})
 +\sup_{0\leq t\leq T}(\|u^{n}\|_{H^{3}}+\|k^{n}\|_{H^{2}}+\|\varepsilon^{n}\|_{H^{2}}
 +\|h^{n}\|_{H^{2}})\nonumber\\
 &&+\sup_{0\leq t\leq T}(\|\sqrt{\rho^{n}}u^{n}_{t}\|_{L^{2}}+
 \|\sqrt{\rho^{n}}h^{n}_{t}\|_{L^{2}}+\|\sqrt{\rho^{n}}k^{n}_{t}\|_{L^{2}}+\|\sqrt{\rho^{n}}\varepsilon^{n}_{t}\|_{L^{2}})\nonumber\\
 &&+\int_{0}^{T}(\|u^{n}_{t}\|_{H^{1}}^{2}
 +\|h^{n}_{t}\|_{H^{1}}^{2}
 +\|k^{n}_{t}\|_{H^{1}}^{2}
 +\|\varepsilon^{n}_{t}\|_{H^{1}}^{2}
 +\|u^{n}\|_{H^{4}}^{2}
 +\|k^{n}\|_{H^{3}}^{2})\leq \widetilde{C}\label{another conclusion}
\end{eqnarray}
for all $n\geq 1$. Throughout the proof, we denote by $\widetilde{C}$ a generic constant
depending only on $m$,  $\gamma$, $|\Omega|$ and $c_{0}$, but independent of n.
Next, we will show that the full sequence $(\rho^{n},u^{n},h^{n},k^{n},\varepsilon^{n})$
converges to a solution to the original nonlinear problem (\ref{1.1})-(\ref{1.9}) in the strong sense.

Define $\overline{\rho}^{n+1}=\rho^{n+1}-\rho^{n}$, $\overline{u}^{n+1}=u^{n+1}-u^{n}$,\
$\overline{h}^{n+1}=h^{n+1}-h^{n}$, $\overline{k}^{n+1}=k^{n+1}-k^{n}$, $\overline{\varepsilon}^{n+1}=\varepsilon^{n+1}-\varepsilon^{n}$,\
$\overline{p}^{n+1}=p^{n+1}-p^{n}=(\rho^{n+1})^{\gamma}-(\rho^{n})^{\gamma}$.

Then, by equations (\ref{2.1})-(\ref{2.5}), we deduce that ($\overline{\rho}^{n+1}$,\ $\overline{u}^{n+1}$,
$\overline{h}^{n+1}$,\ $\overline{k}^{n+1}$,\ $\overline{\varepsilon}^{n+1}$,\ $\overline{p}^{n+1}$) satisfy the following equations:
\begin{eqnarray}
&&\overline{\rho}^{n+1}_{t}+\nabla\cdot(\overline{\rho}^{n+1}u^{n}+\rho^{n}\overline{u}^{n})=0,\label{dif1}\\
&&\rho^{n+1}\overline{u}^{n+1}_{t}+\overline{\rho}^{n+1}u^{n}_{t}
+\rho^{n+1}u^{n}\cdot\nabla\overline{u}^{n+1}
+\overline{\rho}^{n+1}u^{n}\cdot\nabla u^{n}
+\rho^{n}\overline{u}^{n}\cdot\nabla u^{n}\nonumber\\
&&-\Delta \overline{u}^{n+1}
-\nabla(\nabla\cdot\overline{u}^{n+1})
+\nabla\overline{p}^{n+1}
=\frac{-2}{3}\nabla(\overline{\rho}^{n+1}k^{n}+\rho^{n}\overline{k}^{n}),\label{dif2}\\
&&\rho^{n+1}\overline{h}^{n+1}_{t}+\overline{\rho}^{n+1}h^{n}_{t}
+\rho^{n+1}u^{n}\cdot\nabla\overline{h}^{n+1}
+\overline{\rho}^{n+1}u^{n}\cdot\nabla h^{n}
+\rho^{n}\overline{u}^{n}\cdot\nabla h^{n}\nonumber\\
&&-\Delta \overline{h}^{n+1}
=\overline{p}^{n+1}_{t}+\overline{u}^{n+1}\cdot\nabla p^{n+1}+u^{n}\cdot\nabla\overline{p}^{n+1}
+S_{k,n+1}^{'}-S_{k,n}^{'},\label{dif3}
\end{eqnarray}
\begin{eqnarray}
&&\rho^{n+1}\overline{k}^{n+1}_{t}+\overline{\rho}^{n+1}k^{n}_{t}
+\rho^{n+1}u^{n}\cdot\nabla\overline{k}^{n+1}
+\overline{\rho}^{n+1}u^{n}\cdot\nabla k^{n}
+\rho^{n}\overline{u}^{n}\cdot\nabla k^{n}\nonumber\\
&&-\Delta \overline{k}^{n+1}
=G_{n+1}^{'}-G_{n}^{'}-(\rho^{n+1}\varepsilon^{n}-\rho^{n}\varepsilon^{n-1}),\label{dif4}\\
&&\rho^{n+1}\overline{\varepsilon}^{n+1}_{t}+\overline{\rho}^{n+1}\varepsilon^{n}_{t}
+\rho^{n+1}u^{n}\cdot\nabla\overline{\varepsilon}^{n+1}
+\overline{\rho}^{n+1}u^{n}\cdot\nabla \varepsilon^{n}
+\rho^{n}\overline{u}^{n}\cdot\nabla \varepsilon^{n}\nonumber\\
&&-\Delta \overline{\varepsilon}^{n+1}
=C_{1}\bigg(\frac{G_{n+1}^{'} \varepsilon^{n}}{k^{n}}
-\frac{G_{n}^{'} \varepsilon^{n-1}}{k^{n-1}}\bigg)
-C_{2}\bigg(\frac{\rho^{n+1}(\varepsilon^{n})^{2}}{k^{n}}
-\frac{\rho^{n}(\varepsilon^{n-1})^{2}}{k^{n-1}}\bigg),\label{dif5}
\end{eqnarray}
where \be
S_{k,n+1}^{'}=[\mu(\partial_{j}u_{i}^{n}+\partial_{i}u_{j}^{n})
-\frac{2}{3}\delta_{ij}\mu\partial_{k}u_{k}^{n}]\partial_{j}u_{i}^{n}
+\frac{\mu_{t}}{(\rho^{n+1})^{2}}\partial_{j}p^{n+1}\partial_{j}\rho^{n+1},
\ee
\be
G_{n+1}^{'}=\partial_{j}u_{i}^{n}[\mu_{e}(\partial_{j}u_{i}^{n}+\partial_{i}u_{j}^{n})
-\frac{2}{3}\delta_{ij}(\rho^{n+1}k^{n}+\mu_{e}\partial_{l}u_{l}^{n})].
\ee

To evaluate $\|\overline{\rho}^{n+1}\|_{L^{2}}$, multiplying both
sides of equation (\ref{dif1}) by $\overline{\rho}^{n+1}$ and
integrating  the result  over $\Omega$,  we get
\begin{eqnarray}
&&\frac{1}{2}\frac{\mbox{d}}{\mbox{d}t}\|\overline{\rho}^{n+1}\|_{L^{2}}^{2}
=-\int\nabla\cdot(\overline{\rho}^{n+1}u^{n}+\rho^{n}\overline{u}^{n})\cdot\overline{\rho}^{n+1}\nonumber\\
&&=-\int (\overline{\rho}^{n+1})^{2}\nabla\cdot u^{n}
+\overline{\rho}^{n+1}u^{n}\cdot\nabla\overline{\rho}^{n+1}
+\rho^{n}\overline{\rho}^{n+1}\nabla\cdot \overline{u}^{n}
+\overline{\rho}^{n+1}\overline{u}^{n}\cdot\nabla\rho^{n}.\label{rounplus1}
\end{eqnarray}

Applying integration by parts to the second term of the second equality of (\ref{rounplus1}) and
using H\"{o}lder, Sobolev and Young's inequalities yield
\begin{eqnarray}
\frac{\mbox{d}}{\mbox{d}t}\|\overline{\rho}^{n+1}\|_{L^{2}}^{2}
&\leq& C(\|\nabla u^{n}\|_{L^{\infty}}\|\overline{\rho}^{n+1}\|_{L^{2}}^{2}
+\|\nabla \overline{u}^{n}\|_{L^{2}}\|\overline{\rho}^{n+1}\|_{L^{2}}
+\|\overline{u}^{n}\|_{L^{6}}\|\nabla\rho^{n}\|_{L^{3}}\|\overline{\rho}^{n+1}\|_{L^{2}})\nonumber\\
&\leq& \widetilde{C}(1+\eta^{-1})\|\overline{\rho}^{n+1}\|_{L^{2}}^{2}
+\widetilde{C}\eta\|\nabla \overline{u}^{n}\|_{H^{1}}^{2},\label{drou}
\end{eqnarray}
where (\ref{another conclusion}) has been used and $0<\eta<1$  is a small constant to be determined later.

Next, multiplying both sides of (\ref{dif2}) by $\overline{u}^{n+1}$ and integrating the result thus derived over $\Omega$,  one obtains
\begin{eqnarray}
&&\frac{1}{2}\frac{\mbox{d}}{\mbox{d}t}\|\sqrt{\rho^{n+1}}\overline{u}^{n+1}\|_{L^{2}}^{2}
+\|\nabla\overline{u}^{n+1}\|_{L^{2}}^{2}
+\|\nabla\cdot\overline{u}^{n+1}\|_{L^{2}}^{2}\nonumber\\
&&=-\int \overline{\rho}^{n+1}u^{n}_{t}\cdot\overline{u}^{n+1}
-\int\overline{\rho}^{n+1}u^{n}\cdot\nabla u^{n}\cdot\overline{u}^{n+1}
-\int\rho^{n}\overline{u}^{n}\cdot\nabla u^{n}\cdot\overline{u}^{n+1}
-\int\nabla\overline{p}^{n+1}\cdot\overline{u}^{n+1}\nonumber\\
&&+\int\frac{-2}{3}\nabla(\overline{\rho}^{n+1}k^{n}+
\rho^{n}\overline{k}^{n+1})\cdot\overline{u}^{n+1}
=\mathop{\sum}_{i=1}^{5}L_{i}.\label{difu}
\end{eqnarray}

Using H\"{o}lder, Sobolev and Young's inequalities and (\ref{another conclusion}), we estimate  $L_{1}$, $L_{2}$ and $L_{3}$, respectively, as follows:
\begin{eqnarray}
&&L_{1}\leq C\|\overline{\rho}^{n+1}\|_{L^{2}}\|u_{t}^{n}\|_{L^{3}}\|\overline{u}^{n+1}\|_{L^{6}}
\leq C\|\overline{\rho}^{n+1}\|_{L^{2}}\|u_{t}^{n}\|_{L^{3}}(\|\sqrt{\rho^{n+1}}\overline{u}^{n+1}\|_{L^{2}}+\|\nabla\overline{u}^{n+1}\|_{L^{2}})\nonumber\\
&&\leq \widetilde{C}\|u_{t}^{n}\|_{L^{3}}^{2}\|\overline{\rho}^{n+1}\|_{L^{2}}^{2}
+\widetilde{C}\|\sqrt{\rho^{n+1}}\overline{u}^{n+1}\|_{L^{2}}^{2}
+\frac{1}{8}\|\nabla\overline{u}^{n+1}\|_{L^{2}}^{2},\label{L1}
\end{eqnarray}
\begin{eqnarray}
&&L_{2}\leq C\|\overline{\rho}^{n+1}\|_{L^{2}}\|u^{n}\|_{L^{6}}\|\nabla u^{n}\|_{L^{6}}\|\overline{u}^{n+1}\|_{L^{6}}\nonumber\\
&&\leq \widetilde{C}\|\overline{\rho}^{n+1}\|_{L^{2}}^{2}
+\widetilde{C}\|\sqrt{\rho^{n+1}}\overline{u}^{n+1}\|_{L^{2}}^{2}+\frac{1}{8}\|\nabla\overline{u}^{n+1}\|_{L^{2}}^{2},\label{L2}
\end{eqnarray}
\begin{eqnarray}
L_{3}
\leq C\|\overline{u}^{n}\|_{L^{6}}
\|\nabla u^{n}\|_{L^{3}}\|\sqrt{\rho^{n+1}}\overline{u}^{n+1}\|_{L^{2}}
\leq \widetilde{C}\eta^{-1}\|\sqrt{\rho^{n+1}}\overline{u}^{n+1}\|_{L^{2}}^{2}
+\eta\|\overline{u}^{n}\|_{H^{1}}^{2}.\label{L3}
\end{eqnarray}
And then, one deduces by integration by parts that
\begin{eqnarray}
L_{4}=\int \overline{p}^{n+1}\nabla\cdot\overline{u}^{n+1}
\leq C\int\overline{\rho}^{n+1}\nabla\cdot\overline{u}^{n+1}
\leq \widetilde{C}\|\overline{\rho}^{n+1}\|_{L^{2}}^{2}+\frac{1}{8}\|\nabla\overline{u}^{n+1}\|_{L^{2}}^{2},\label{L4}
\end{eqnarray}
and
\begin{eqnarray}\nonumber
&&L_{5}=\frac{2}{3}\int\overline{\rho}^{n+1}k^{n}\nabla\cdot\overline{u}^{n+1}
-\overline{k}^{n}\nabla\rho^{n}\cdot\overline{u}^{n+1}
-\rho^{n}\nabla\overline{k}^{n}\cdot\overline{u}^{n+1}\\
&&\leq C\|\overline{\rho}^{n+1}\|_{L^{2}}\|\nabla\overline{u}^{n+1}\|_{L^{2}}
+C\|\overline{k}^{n}\|_{L^{6}}\|\nabla\rho^{n}\|_{L^{3}}\|\sqrt{\rho^{n+1}}\overline{u}^{n+1}\|_{L^{2}}
+C\|\nabla\overline{k}^{n}\|_{L^{2}}\|\sqrt{\rho^{n+1}}\overline{u}^{n+1}\|_{L^{2}}\nonumber\\
&&\leq \widetilde{C}(1+\eta^{-1})(\|\overline{\rho}^{n+1}\|_{L^{2}}^{2}
+\|\sqrt{\rho^{n+1}}\overline{u}^{n+1}\|_{L^{2}}^{2})
+\frac{1}{8}\|\nabla\overline{u}^{n+1}\|_{L^{2}}^{2}
+\widetilde{C}\eta\|\overline{k}^{n}\|_{H^{1}}^{2}.\label{L5}
\end{eqnarray}
Inserting (\ref{L1})-(\ref{L5}) to (\ref{difu}) and using inequality
$\|\overline{u}^{n+1}\|_{L^{2}}\leq \widetilde{C}\|\sqrt{\rho^{n+1}}\overline{u}^{n+1}\|_{L^{2}}$, one has
\begin{eqnarray}
&&\frac{\mbox{d}}{\mbox{d}t}\|\sqrt{\rho^{n+1}}\overline{u}^{n+1}\|_{L^{2}}^{2}
+\|\overline{u}^{n+1}\|_{H^{1}}^{2}\label{difu-2}\\
&&\leq \widetilde{C}(1+\eta^{-1}+\|u_{t}^{n}\|_{L^{3}}^{2})(\|\overline{\rho}^{n+1}\|_{L^{2}}^{2}
+\|\sqrt{\rho^{n+1}}\overline{u}^{n+1}\|_{L^{2}}^{2})
+\widetilde{C}\eta\|\overline{k}^{n}\|_{H^{1}}^{2}
+\widetilde{C}\eta\|\overline{u}^{n}\|_{H^{1}}^{2}.\nonumber
\end{eqnarray}

Then, multiplying both sides of (\ref{dif3}) by $\overline{h}^{n+1}$ and integrating the result thus got over $\Omega$,  one obtains
\begin{eqnarray}
&&\frac{1}{2}\frac{\mbox{d}}{\mbox{d}t}\|\sqrt{\rho^{n+1}}\overline{h}^{n+1}\|_{L^{2}}^{2}
+\|\nabla\overline{h}^{n+1}\|_{L^{2}}^{2}\nonumber\\
&&=-\int \overline{\rho}^{n+1}h^{n}_{t}\cdot\overline{h}^{n+1}
-\int\overline{\rho}^{n+1}u^{n}\cdot\nabla h^{n}\cdot\overline{h}^{n+1}
-\int\rho^{n}\overline{u}^{n}\cdot\nabla h^{n}\cdot\overline{h}^{n+1}\label{difh}\\
&&+\int (\overline{p}^{n+1}_{t}+\overline{u}^{n+1}\cdot\nabla p^{n+1}+u^{n}\cdot\nabla\overline{p}^{n+1})\cdot\overline{h}^{n+1}
+\int (S_{k,n+1}^{'}-S_{k,n}^{'})\cdot\overline{h}^{n+1}
=\mathop{\sum}_{i=1}^{5}M_{i}.\nonumber
\end{eqnarray}
First, using similar method of deriving (\ref{L1}), (\ref{L2}) and (\ref{L3}), respectively, one easily obtains
\begin{eqnarray}
M_{1}
\leq \widetilde{C}\|h_{t}^{n}\|_{L^{3}}^{2}\|\overline{\rho}^{n+1}\|_{L^{2}}^{2}
+\widetilde{C}\|\sqrt{\rho^{n+1}}\overline{h}^{n+1}\|_{L^{2}}^{2}+\frac{1}{20}\|\nabla\overline{h}^{n+1}\|_{L^{2}}^{2},\label{M1}
\end{eqnarray}
\begin{eqnarray}
M_{2}
\leq \widetilde{C}\|\overline{\rho}^{n+1}\|_{L^{2}}^{2}
+\widetilde{C}\|\sqrt{\rho^{n+1}}\overline{h}^{n+1}\|_{L^{2}}^{2}+\frac{1}{20}\|\nabla\overline{h}^{n+1}\|_{L^{2}}^{2},\label{M2}
\end{eqnarray}
\begin{eqnarray}
M_{3}\leq\widetilde{C}\eta^{-1}\|\sqrt{\rho^{n+1}}\overline{h}^{n+1}\|_{L^{2}}^{2}
+\eta\|\overline{u}^{n}\|_{H^{1}}^{2}.\label{M3}
\end{eqnarray}
Second, simple calculation leads to
\begin{eqnarray}
&&M_{4}=\int [\gamma(\rho^{n+1})^{\gamma-1}\rho^{n+1}_{t}-\gamma(\rho^{n})^{\gamma-1}\rho^{n}_{t}]\cdot\overline{h}^{n+1}
+\int \overline{u}^{n+1}\cdot\nabla p^{n+1}\overline{h}^{n+1}\nonumber\\
&&+\int u^{n}\cdot\nabla \overline{p}^{n+1}\overline{h}^{n+1}.\label{MM4-1}
\end{eqnarray}
By the differential mean value theorem, the first integral of (\ref{MM4-1}) can be controlled as
\begin{eqnarray}
&&\int [\gamma(\rho^{n+1})^{\gamma-1}\rho^{n+1}_{t}-\gamma(\rho^{n})^{\gamma-1}\rho^{n}_{t}]\cdot\overline{h}^{n+1}\nonumber\\
&&\leq C\int|\overline{\rho}^{n+1}||\rho^{n+1}_{t}||\overline{h}^{n+1}|
+\int\gamma(\rho^{n})^{\gamma-1}\overline{\rho}^{n+1}_{t}\cdot\overline{h}^{n+1}.\label{MM4-2}
\end{eqnarray}
By equation (\ref{dif1}), the second integral on the right hand side of (\ref{MM4-2}) can be estimated as
\begin{eqnarray}
&&\int\gamma(\rho^{n})^{\gamma-1}\overline{\rho}^{n+1}_{t}\cdot\overline{h}^{n+1}
=-\int\gamma(\rho^{n})^{\gamma-1}\nabla\cdot(\overline{\rho}^{n+1}u^{n}+\rho^{n}\overline{u}^{n})\cdot\overline{h}^{n+1}\nonumber\\
&&\leq C\int|\nabla\rho^{n}||\overline{h}^{n+1}||\overline{\rho}^{n+1}||u^{n}|
+C\int|\overline{\rho}^{n+1}||u^{n}||\nabla\overline{h}^{n+1}|\nonumber\\
&&+C\int(|\nabla\rho^{n}||\overline{u}^{n}|+|\rho^{n}||\nabla\overline{u}^{n}|)|\overline{h}^{n+1}|.\label{MM4-3}
\end{eqnarray}
Then, the second integral on the right hand side of  (\ref{MM4-1}) can be controlled as
\begin{eqnarray}
\int \overline{u}^{n+1}\cdot\nabla p^{n+1}\overline{h}^{n+1}
\leq C\int|\overline{u}^{n+1}||\nabla \rho^{n+1}||\overline{h}^{n+1}|.
\end{eqnarray}

Next, applying integration by parts to the third integral on the right hand side of (\ref{MM4-1}), we easily get
\begin{eqnarray}
\int u^{n}\cdot\nabla \overline{p}^{n+1}\overline{h}^{n+1}
\leq C\int|\nabla u^{n}||\overline{\rho}^{n+1}||\overline{h}^{n+1}|
+C\int|u^{n}||\overline{\rho}^{n+1}||\nabla \overline{h}^{n+1}|.\label{MM4-4}
\end{eqnarray}
Consequently, combining (\ref{MM4-1})-(\ref{MM4-4}) and using H\"{o}lder, Sobolev and Young's inequalities and
 (\ref{another conclusion}), one obtains
\begin{eqnarray}
&&M_{4}\leq \widetilde{C}(1+\eta^{-1})(\|\overline{\rho}^{n+1}\|_{L^{2}}^{2}
+\|\sqrt{\rho^{n+1}}\overline{h}^{n+1}\|_{L^{2}}^{2}
)\nonumber\\
&&+\frac{1}{4}\|\overline{u}^{n+1}\|_{H^{1}}^{2}
+\frac{1}{20}\|\nabla\overline{h}^{n+1}\|_{L^{2}}^{2}+\widetilde{C}\eta\|\overline{u}^{n}\|_{H^{1}}^{2}.\label{M4-2}
\end{eqnarray}

Finally, we  evaluate $M_{5}$.
Direct calculation yields
\begin{eqnarray}
&&M_{5}
\leq C\int(|\nabla u^{n}|+|\nabla u^{n-1}|)|\nabla \overline{u}^{n}||\overline{h}^{n+1}|
+C\int|\overline{\rho}^{n+1}||\nabla\rho^{n+1}|^{2}|\overline{h}^{n+1}|\nonumber\\
&&+\int\frac{\mu_{t}}{(\rho^{n})^{2}}\partial_{j}\overline{p}^{n+1}\partial_{j}\rho^{n+1}\cdot\overline{h}^{n+1}
+\int\frac{\mu_{t}}{(\rho^{n})^{2}}\partial_{j}p^{n}\partial_{j}\overline{\rho}^{n+1}\cdot\overline{h}^{n+1}\nonumber\\
&&\leq C\int(|\nabla u^{n}|+|\nabla u^{n-1}|)|\nabla \overline{u}^{n}||\overline{h}^{n+1}|
+C\int|\overline{\rho}^{n+1}||\nabla\rho^{n+1}|^{2}|\overline{h}^{n+1}|\nonumber\\
&&+C\int|\nabla\rho^{n}||\nabla\rho^{n+1}||\overline{\rho}^{n+1}||\overline{h}^{n+1}|
+C\int|\nabla^{2}\rho^{n+1}||\overline{\rho}^{n+1}||\overline{h}^{n+1}|\nonumber\\
&&+C\int|\nabla\rho^{n+1}||\overline{\rho}^{n+1}||\nabla\overline{h}^{n+1}|
+C\int|\nabla\rho^{n}|^{2}||\overline{\rho}^{n+1}||\overline{h}^{n+1}|\nonumber\\
&&+C\int|\nabla^{2}\rho^{n}||\overline{\rho}^{n+1}||\overline{h}^{n+1}|
+C\int|\nabla\rho^{n}||\overline{\rho}^{n+1}||\nabla\overline{h}^{n+1}|.\label{M5}
\end{eqnarray}

Then, applying similar method of deriving (\ref{M4-2}), one deduces
\begin{eqnarray}
M_{5}\leq \widetilde{C}(1+\eta^{-1})(\|\overline{\rho}^{n+1}\|_{L^{2}}^{2}
+\|\sqrt{\rho^{n+1}}\overline{h}^{n+1}\|_{L^{2}}^{2})
+\eta\|\overline{u}^{n}\|_{H^{1}}^{2}+\frac{1}{20}\|\nabla\overline{h}^{n+1}\|_{L^{2}}^{2}.\label{M5-2}
\end{eqnarray}

Consequently, inserting (\ref{M1})-(\ref{M3}), (\ref{M4-2}) and (\ref{M5-2}) to (\ref{difh}), one gets
\begin{eqnarray}
&&\frac{\mbox{d}}{\mbox{d}t}\|\sqrt{\rho^{n+1}}\overline{h}^{n+1}\|_{L^{2}}^{2}
+\|\overline{h}^{n+1}\|_{H^{1}}^{2}\nonumber\\
&&\leq \widetilde{C}(1+\eta^{-1}+\|h_{t}^{n}\|_{L^{3}}^{2})(\|\overline{\rho}^{n+1}\|_{L^{2}}^{2}
+\|\sqrt{\rho^{n+1}}\overline{h}^{n+1}\|_{L^{2}}^{2})\nonumber\\
&&+\frac{1}{4}\|\overline{u}^{n+1}\|_{H^{1}}^{2}
+\tilde{C}\eta\|\overline{u}^{n}\|_{H^{1}}^{2}.\label{difh-2}
\end{eqnarray}

For the turbulent kinetic energy $k$, using similar method of
deriving (\ref{difh}), one easily deduces from equation (\ref{dif4})
that
\begin{eqnarray}
&&\frac{1}{2}\frac{\mbox{d}}{\mbox{d}t}\|\sqrt{\rho^{n+1}}\overline{k}^{n+1}\|_{L^{2}}^{2}
+\|\nabla\overline{k}^{n+1}\|_{L^{2}}^{2}
=-\int \overline{\rho}^{n+1}k^{n}_{t}\cdot\overline{k}^{n+1}
-\int\overline{\rho}^{n+1}u^{n}\cdot\nabla k^{n}\cdot\overline{k}^{n+1}\label{difk}\\
&&-\int\rho^{n}\overline{u}^{n}\cdot\nabla k^{n}\cdot\overline{k}^{n+1}
+\int(G_{n+1}^{'}-G_{n}^{'})\cdot\overline{k}^{n+1}
-\int(\rho^{n+1}\varepsilon^{n}-\rho^{n}\varepsilon^{n-1})\cdot\overline{k}^{n+1}
=\mathop{\sum}_{i=1}^{5}N_{i}.\nonumber
\end{eqnarray}

We first evaluate $N_{4}$. Using inserting items technic, one easily
gets
\begin{eqnarray}
&&N_{4}\leq C\int(|\nabla u^{n}|+|\nabla u^{n-1}|)|\nabla \overline{u}^{n}||\overline{k}^{n+1}|\nonumber\\
&&+C\int(|\nabla \overline{u}^{n}|+|\nabla u^{n-1}||\overline{\rho}^{n+1}|
+|\nabla u^{n-1}||\overline{k}^{n}|)|\overline{k}^{n+1}|.\label{NN4-1}
\end{eqnarray}

Using H\"{o}lder, Sobolev, and Young's inequalities and
(\ref{another conclusion}), we have
\begin{eqnarray}
N_{4}\leq \widetilde{C}(1+\eta^{-1})(\|\overline{\rho}^{n+1}\|_{L^{2}}^{2}
+\|\sqrt{\rho^{n+1}}\overline{k}^{n+1}\|_{L^{2}}^{2})
+\widetilde{C}\eta\|\overline{k}^{n}\|_{H^{1}}^{2}
+\widetilde{C}\eta\|\overline{u}^{n}\|_{H^{1}}^{2}.\label{NN4-2}
\end{eqnarray}

 Second, we estimate $N_{5}$. Using similar method of deriving (\ref{NN4-1}) and (\ref{NN4-2}), we have
 \begin{eqnarray}
 &&N_{5}=\int(\overline{\rho}^{n+1}\varepsilon^{n}
 +\rho^{n}\overline{\varepsilon}^{n})\cdot\overline{k}^{n+1}
 \leq C(\|\overline{\rho}^{n+1}\|_{L^{2}}\|\varepsilon^{n}\|_{L^{\infty}}
 +\|\overline{\varepsilon}^{n}\|_{L^{6}}\|\rho^{n}\|_{L^{3}})\|\sqrt{\rho^{n+1}}\overline{k}^{n+1}\|_{L^{2}}\nonumber\\
 &&\leq \widetilde{C}(1+\eta^{-1})(\|\sqrt{\rho^{n+1}}\overline{k}^{n+1}\|_{L^{2}}^{2}+\|\overline{\rho}^{n+1}\|_{L^{2}}^{2})
+\widetilde{C}\eta\|\overline{\varepsilon}^{n}\|_{H^{1}}^{2}.\label{NN5}
 \end{eqnarray}

Next, using similar method of deriving  the estimates of (\ref{L1}), (\ref{L2}) and (\ref{L3}), respectively, one easily gets
\begin{eqnarray}
N_{1}
\leq \widetilde{C}\|k_{t}^{n}\|_{L^{3}}^{2}\|\overline{\rho}^{n+1}\|_{L^{2}}^{2}
+\widetilde{C}\|\sqrt{\rho^{n+1}}\overline{k}^{n+1}\|_{L^{2}}
+\frac{1}{8}\|\nabla\overline{k}^{n+1}\|_{L^{2}}^{2},\label{N1}
\end{eqnarray}
\begin{eqnarray}
N_{2}
\leq \widetilde{C}\|\overline{\rho}^{n+1}\|_{L^{2}}^{2}
+\widetilde{C}\|\sqrt{\rho^{n+1}}\overline{k}^{n+1}\|_{L^{2}}^{2}
+\frac{1}{8}\|\nabla\overline{k}^{n+1}\|_{L^{2}}^{2},\label{N2}
\end{eqnarray}
\begin{eqnarray}
N_{3}\leq\widetilde{C}\eta^{-1}\|\sqrt{\rho^{n+1}}\overline{k}^{n+1}\|_{L^{2}}^{2}
+\eta\|\overline{u}^{n}\|_{H^{1}}^{2}.\label{N3}
\end{eqnarray}

Consequently, inserting (\ref{NN4-2})-(\ref{N3}) to (\ref{difk}),
one deduces
\begin{eqnarray}
&&\frac{\mbox{d}}{\mbox{d}t}\|\sqrt{\rho^{n+1}}\overline{k}^{n+1}\|_{L^{2}}^{2}
+\|\overline{k}^{n+1}\|_{H^{1}}^{2}\label{difk-2}\\
&&\leq \widetilde{C}(1+\eta^{-1}+\|k_{t}^{n}\|_{L^{3}}^{2})(\|\sqrt{\rho^{n+1}}\overline{k}^{n+1}\|_{L^{2}}^{2}+\|\overline{\rho}^{n+1}\|_{L^{2}}^{2})
+\widetilde{C}\eta(\|\overline{k}^{n}\|_{H^{1}}^{2}
+\|\overline{u}^{n}\|_{H^{1}}^{2}
+\|\overline{\varepsilon}^{n}\|_{H^{1}}^{2}).\nonumber
\end{eqnarray}

Next, multiplying both sides of (\ref{dif5}) by  $\overline{\varepsilon}^{n+1}$  and integrating  the result
 over $\Omega$, one gets

\begin{eqnarray}
&&\frac{1}{2}\frac{\mbox{d}}{\mbox{d}t}\|\sqrt{\rho^{n+1}}\overline{\varepsilon}^{n+1}\|_{L^{2}}^{2}
+\|\nabla\overline{\varepsilon}^{n+1}\|_{L^{2}}^{2}
=-\int \overline{\rho}^{n+1}\varepsilon^{n}_{t}\cdot\overline{\varepsilon}^{n+1}
-\int\overline{\rho}^{n+1}u^{n}\cdot\nabla \varepsilon^{n}\cdot\overline{\varepsilon}^{n+1}\nonumber\\
&&-\int\rho^{n}\overline{u}^{n}\cdot\nabla \varepsilon^{n}\cdot\overline{\varepsilon}^{n+1}
+C_{1}\int\bigg(\frac{G_{n+1}^{'}\varepsilon^{n}}{k^{n}}-\frac{G_{n}^{'}\varepsilon^{n-1}}{k^{n-1}}\bigg)\cdot\overline{\varepsilon}^{n+1}\nonumber\\
&&-C_{2}\int \bigg[\frac{\rho^{n+1}(\varepsilon^{n})^{2}}{k^{n}}
-\frac{\rho^{n}(\varepsilon^{n-1})^{2}}{k^{n-1}}\bigg]\cdot\overline{\varepsilon}^{n+1}
=\mathop{\sum}_{i=1}^{5}Q_{i}.\label{dife}
\end{eqnarray}

Using  an argument similar to that used in deriving (\ref{L1}),
(\ref{L2})  and (\ref{L3}), respectively, we obtain
 \begin{eqnarray}
Q_{1}
\leq \widetilde{C}\|\varepsilon_{t}^{n}\|_{L^{3}}^{2}\|\overline{\rho}^{n+1}\|_{L^{2}}^{2}
+\widetilde{C}\|\sqrt{\rho^{n+1}}\overline{\varepsilon}^{n+1}\|_{L^{2}}^{2}
+\frac{1}{8}\|\nabla\overline{\varepsilon}^{n+1}\|_{L^{2}}^{2},\label{Q1}
\end{eqnarray}
\begin{eqnarray}
Q_{2}
\leq \widetilde{C}\|\overline{\rho}^{n+1}\|_{L^{2}}^{2}
+\widetilde{C}\|\sqrt{\rho^{n+1}}\overline{\varepsilon}^{n+1}\|_{L^{2}}^{2}
+\frac{1}{8}\|\nabla\overline{\varepsilon}^{n+1}\|_{L^{2}}^{2},\label{Q2}
\end{eqnarray}
\begin{eqnarray}
Q_{3}
\leq \widetilde{C}\eta^{-1}\|\sqrt{\rho^{n+1}}\overline{\varepsilon}^{n+1}\|_{L^{2}}^{2}
+\widetilde{C}\eta\|\overline{u}^{n}\|_{H^{1}}^{2}.\label{Q3}
\end{eqnarray}

Next, direct calculation leads to
\begin{eqnarray}
&&Q_{4}
\leq C\int(|\nabla \overline{u}^{n}||\nabla u^{n}|+
|\nabla \overline{u}^{n}||\nabla u^{n-1}|
)|\varepsilon^{n}||\overline{\varepsilon}^{n+1}|
+C\int(|\overline{\varepsilon}^{n}|+|\varepsilon^{n-1}||\overline{k}^{n}|
)|\nabla u^{n-1}|^{2}|\overline{\varepsilon}^{n+1}|\nonumber\\
&&-\frac{2C_{1}}{3}\delta_{ij}\int\frac{(\partial_{j}u_{i}^{n}\rho^{n+1}k^{n}\varepsilon^{n}k^{n-1}
-\partial_{j}u_{i}^{n-1}\rho^{n}k^{n-1}\varepsilon^{n-1}k^{n})}{k^{n}k^{n-1}}\cdot\overline{\varepsilon}^{n+1}\nonumber\\
&&\leq \int(|\nabla \overline{u}^{n}||\nabla u^{n}|+
|\nabla \overline{u}^{n}||\nabla u^{n-1}|
)|\varepsilon^{n}||\overline{\varepsilon}^{n+1}|\nonumber\\
&&+C\int(|\overline{\varepsilon}^{n}|+|\varepsilon^{n-1}||\overline{k}^{n}|
)|\nabla u^{n-1}|^{2}|\overline{\varepsilon}^{n+1}|\nonumber\\
&&+C\int(|\nabla \overline{u}^{n}|
+|\nabla u^{n-1}||\overline{\rho}^{n+1}|
+|\nabla u^{n-1}||\overline{k}^{n}|)|\varepsilon^{n}||\overline{\varepsilon}^{n+1}|\nonumber\\
&&+C\int(|\overline{\varepsilon}^{n}|
+|\varepsilon^{n-1}||\overline{k}^{n}|
)|\nabla u^{n-1}||\overline{\varepsilon}^{n+1}|\label{Q4-2}\\
&&\leq \widetilde{C}(1+\eta^{-1})(\|\sqrt{\rho^{n+1}}\overline{\varepsilon}^{n+1}\|_{L^{2}}^{2}+\|\overline{\rho}^{n+1}\|_{L^{2}}^{2})\nonumber\\
&&+\widetilde{C}\eta(\|\overline{u}^{n}\|_{H^{1}}^{2}
+\|\overline{k}^{n}\|_{H^{1}}^{2}
+\|\overline{\varepsilon}^{n}\|_{H^{1}}^{2})
+\frac{1}{8}\|\nabla\overline{\varepsilon}^{n+1}\|_{L^{2}}^{2}.\nonumber
\end{eqnarray}

Finally, using similar method in deriving the estimate of $Q_{4}$,
one deduces
\begin{eqnarray}
&&Q_{5}
\leq \widetilde{C}(1+\eta^{-1})(\|\sqrt{\rho^{n+1}}\overline{\varepsilon}^{n+1}\|_{L^{2}}^{2}+\|\overline{\rho}^{n+1}\|_{L^{2}}^{2})
+\widetilde{C}\eta\|\nabla\overline{\varepsilon}^{n}\|_{L^{2}}^{2}
+\frac{1}{8}\|\nabla\overline{\varepsilon}^{n+1}\|_{L^{2}}^{2}.\label{Q5}
\end{eqnarray}

Consequently, inserting (\ref{Q1})-(\ref{Q5}) to (\ref{dife}), one
derives
\begin{eqnarray}
&&\frac{\mbox{d}}{\mbox{d}t}\|\sqrt{\rho^{n+1}}\overline{\varepsilon}^{n+1}\|_{L^{2}}^{2}
+\|\overline{\varepsilon}^{n+1}\|_{H^{1}}^{2}\label{dife-2}\\
&&\leq \widetilde{C}(1+\eta^{-1}+\|\varepsilon_{t}^{n}\|_{L^{3}}^{2})
(\|\sqrt{\rho^{n+1}}\overline{\varepsilon}^{n+1}\|_{L^{2}}^{2}+\|\overline{\rho}^{n+1}\|_{L^{2}}^{2})
+\widetilde{C}\eta(\|\overline{k}^{n}\|_{H^{1}}^{2}
+\|\overline{u}^{n}\|_{H^{1}}^{2}
+\|\overline{\varepsilon}^{n}\|_{H^{1}}^{2}).\nonumber
\end{eqnarray}

In the end, combining (\ref{drou}), (\ref{difu-2}), (\ref{difh-2}), (\ref{difk-2}) and (\ref{dife-2}) and setting
$
\varphi^{n+1}(t)=\|\overline{\rho}^{n+1}\|_{L^{2}}^{2}
+\|\sqrt{\rho^{n+1}}\overline{u}^{n+1}\|_{L^{2}}^{2}
+\|\sqrt{\rho^{n+1}}\overline{h}^{n+1}\|_{L^{2}}^{2}
+\|\sqrt{\rho^{n+1}}\overline{k}^{n+1}\|_{L^{2}}^{2}
+\|\sqrt{\rho^{n+1}}\overline{\varepsilon}^{n+1}\|_{L^{2}}^{2}$,\,
we get
\begin{eqnarray}
&&\frac{\mbox{d}}{\mbox{d}t}\varphi^{n+1}(t)
+\|\overline{u}^{n+1}\|_{H^{1}}^{2}
+\|\overline{h}^{n+1}\|_{H^{1}}^{2}
+\|\overline{k}^{n+1}\|_{H^{1}}^{2}
+\|\overline{\varepsilon}^{n+1}\|_{H^{1}}^{2}\label{con2}\\
&&\leq \widetilde{C}(1+\eta^{-1}+\|u_{t}^{n}\|_{L^{3}}^{2}
+\|h_{t}^{n}\|_{L^{3}}^{2}
+\|k_{t}^{n}\|_{L^{3}}^{2}
+\|\varepsilon_{t}^{n}\|_{L^{3}}^{2})\varphi^{n+1}(t)\nonumber\\
&&+\widetilde{C}\eta(\|\overline{u}^{n}\|_{H^{1}}^{2}
+\|\overline{k}^{n}\|_{H^{1}}^{2}
+\|\overline{\varepsilon}^{n}\|_{H^{1}}^{2}).\nonumber
\end{eqnarray}

Setting $I_{\eta}^{n}(t)=\widetilde{C}(1+\eta^{-1}+\|u_{t}^{n}\|_{L^{3}}^{2}
+\|h_{t}^{n}\|_{L^{3}}^{2}
+\|k_{t}^{n}\|_{L^{3}}^{2}
+\|\varepsilon_{t}^{n}\|_{L^{3}}^{2})$ and applying Gronwall's inequality to (\ref{con2}) yield
\be
\varphi^{n+1}(t)
\leq \widetilde{C}\eta\bigg[\exp\bigg(\int_{0}^{t}I_{\eta}^{n}(s)\mbox{d}s\bigg)\bigg]\bigg(\int_{0}^{t}(\|\overline{u}^{n}\|_{H^{1}}^{2}
+\|\overline{k}^{n}\|_{H^{1}}^{2}
+\|\overline{\varepsilon}^{n}\|_{H^{1}}^{2})\mbox{d}s\bigg),\label{con3}
\ee
where it should be noted that $\varphi^{n+1}(0)=0$.

Since
\be
\int_{0}^{t}I_{\eta}^{n}(s)\mbox{d}s
\leq \widetilde{C}t
+\widetilde{C}\eta^{-1}t
+\widetilde{C},\label{con4}
\ee
setting $ \widetilde{T}\leq\eta<1$, then we have
\be
\int_{0}^{t}I_{\eta}^{n}(s)\mbox{d}s
\leq C\widetilde{C}\label{con5}
\ee
for $t\leq \widetilde{T}$.

By (\ref{con3})-(\ref{con5}), integrating (\ref{con2}) from $[0,t]$, one derives
\be
&&\nonumber\varphi^{n+1}(t)
+\int_{0}^{t}(\|\overline{u}^{n+1}\|_{H^{1}}^{2}
+\|\overline{h}^{n+1}\|_{H^{1}}^{2}
+\|\overline{k}^{n+1}\|_{H^{1}}^{2}
+\|\overline{\varepsilon}^{n+1}\|_{H^{1}}^{2})\mbox{d}s\\
&&\nonumber\leq C\widetilde{C}\eta\bigg(\int_{0}^{t}(\|\overline{u}^{n}\|_{H^{1}}^{2}
+\|\overline{k}^{n}\|_{H^{1}}^{2}
+\|\overline{\varepsilon}^{n}\|_{H^{1}}^{2})\mbox{d}s\bigg)
\bigg[\bigg(\int_{0}^{t}I_{\eta}^{n}(s)\mbox{d}s\bigg) \exp\bigg(\int_{0}^{t}I_{\eta}^{n}(s)\mbox{d}s\bigg)+1\bigg]\\
&&\leq C\eta \exp(\widetilde{C})\int_{0}^{t}(\|\overline{u}^{n}\|_{H^{1}}^{2}
+\|\overline{k}^{n}\|_{H^{1}}^{2}
+\|\overline{\varepsilon}^{n}\|_{H^{1}}^{2})\mbox{d}s
\ee
for $T^{*}:=min\{T, \widetilde{T}\}$.

Therefore, we have
\begin{eqnarray}
&&\sum _{n=1}^{\infty}sup_{0\leq t\leq T}\varphi^{n+1}(t)
+\sum _{n=1}^{\infty}\int_{0}^{t}(\|\overline{u}^{n+1}\|_{H^{1}}^{2}
+\|\overline{h}^{n+1}\|_{H^{1}}^{2}
+\|\overline{k}^{n+1}\|_{H^{1}}^{2}
+\|\overline{\varepsilon}^{n+1}\|_{H^{1}}^{2})\mbox{d}s\nonumber\\
&&\leq C\eta \exp(\widetilde{C})\sum _{n=1}^{\infty}\int_{0}^{t}(\|\overline{u}^{n}\|_{H^{1}}^{2}
+\|\overline{k}^{n}\|_{H^{1}}^{2}
+\|\overline{\varepsilon}^{n}\|_{H^{1}}^{2})\mbox{d}s.
\end{eqnarray}
Thus, choosing $\eta$ such that $C\eta \mbox{exp}(\widetilde{C})\leq\frac{1}{2}$, one deduce
\begin{eqnarray}
&&\sum _{n=1}^{\infty}sup_{0\leq t\leq T}\varphi^{n+1}(t)
+\sum _{n=1}^{\infty}\int_{0}^{t}\|\overline{h}^{n+1}\|_{H^{1}}^{2}\mbox{d}s\nonumber\\
&&+\frac{1}{2}\sum _{n=1}^{\infty}\int_{0}^{t}(\|\overline{u}^{n+1}\|_{H^{1}}^{2}
+\|\overline{k}^{n+1}\|_{H^{1}}^{2}
+\|\overline{\varepsilon}^{n+1}\|_{H^{1}}^{2})\mbox{d}s\nonumber\\
&&\leq C\widetilde{C}
<\infty.
\end{eqnarray}
Therefore, we conclude that the full sequence $(\rho^{n},u^{n},h^{n},k^{n},\varepsilon^{n})$
converges to a limit$(\rho,u,h,k,\varepsilon)$ in the following strong sense:
$\rho^{n}\rightarrow\rho$ in $L^{\infty}(0,T;L^{2}(\Omega))$;
$(u^{n},h^{n},k^{n},\varepsilon^{n})\rightarrow(u,h,k,\varepsilon)$  in $L^{2}(0,T;H^{1}(\Omega))$.
 It is easy to prove that the limit $(\rho,u,h,k,\varepsilon)$ is a weak solution to the original nonlinear
  problem. Furthermore, it follows from (\ref{another conclusion}) that $(\rho,u,h,k,\varepsilon)$  satisfies the following
  regularity estimates:
\begin{eqnarray}
&& \sup_{0\leq t\leq T^{*}}(\|\rho\|_{H^{3}}+\|\rho_{t}\|_{H^{1}})
 +\sup_{0\leq t\leq T^{*}}(\|u\|_{H^{3}}+\|k\|_{H^{2}}+\|\varepsilon\|_{H^{2}}
 +\|h\|_{H^{2}})\nonumber\\
 &&+\sup_{0\leq t\leq T^{*}}(\|\sqrt{\rho}u_{t}\|_{L^{2}}+
 \|\sqrt{\rho}h_{t}\|_{L^{2}}+\|\sqrt{\rho}k_{t}\|_{L^{2}}+\|\sqrt{\rho}\varepsilon_{t}\|_{L^{2}})\nonumber\\
 &&+\int_{0}^{T^{*}}(\|u_{t}\|_{H^{1}}^{2}
 +\|h_{t}\|_{H^{1}}^{2}
 +\|k_{t}\|_{H^{1}}^{2}
 +\|\varepsilon_{t}\|_{H^{1}}^{2}
 +\|u\|_{H^{4}}^{2}
 +\|k\|_{H^{3}}^{2})\leq \widetilde{C}<\infty.\nonumber
 \end{eqnarray}
This proves the existence of strong solution. Then, we can easily prove the
time continuity of the solution $(\rho, u, h, k, \varepsilon)$ by adapting the
arguments in \cite{C-C-K, C-K3}. Finally, we  prove the uniqueness.
In fact, assume that $(\rho_{1}, u_{1}, h_{1}, k_{1}, \varepsilon_{1})$ and
$(\rho_{2}, u_{2}, h_{2}, k_{2}, \varepsilon_{2})$ are two strong solutions
to the problem (\ref{1.1})-(\ref{1.9}) with the regularity (\ref{regularity}).
Let $(\overline{\rho}, \overline{u}, \overline{h}, \overline{k}, \overline{\varepsilon})
=(\rho_{1}-\rho_{2}, u_{1}-u_{2}, h_{1}-h_{2}, k_{1}-k_{2}, \varepsilon_{1}-\varepsilon_{2})$.
Then using the same argument as in the derivations of (\ref{drou}),
(\ref{difu-2}), (\ref{difh-2}), (\ref{difk-2}) and (\ref{dife-2}),
we can prove that
\be
\frac{\mbox{d}}{\mbox{d}t}(\|\overline{\rho}\|_{L^{2}}^{2}+
\|\sqrt{\rho_{1}}\overline{u}\|_{L^{2}}^{2}
+\|\sqrt{\rho_{1}}\overline{h}\|_{L^{2}}^{2}
+\|\sqrt{\rho_{1}}\overline{k}\|_{L^{2}}^{2}+\|\sqrt{\rho_{1}}\overline{\varepsilon}\|_{L^{2}}^{2})\nonumber\\
\leq R(t)(\|\overline{\rho}\|_{L^{2}}^{2}+
\|\sqrt{\rho_{1}}\overline{u}\|_{L^{2}}^{2}
+\|\sqrt{\rho_{1}}\overline{h}\|_{L^{2}}^{2}
+\|\sqrt{\rho_{1}}\overline{k}\|_{L^{2}}^{2}+\|\sqrt{\rho_{1}}\overline{\varepsilon}\|_{L^{2}}^{2})\nonumber
\ee
for some $R(t)\in L^{1}(0, T^{*})$.
Thus, by Gronwall's inequality, we conclude that $(\overline{\rho}, \overline{u},
 \overline{h}, \overline{k}, \overline{\varepsilon})
 =(0, 0, 0, 0, 0)$ in $(0,T^{*})\times\Omega$. This completes the proof of Theorem 3.1.
\end{proof}

 \textbf{Competing interests}

The authors declare that they have no competing interests.

 \textbf{Authors’ contributions}
 
The authors contributed equally in this article. They read and approved the final manuscript.

 \textbf{Acknowledgements} 
 
 The research of B Yuan
was partially supported by the National Natural Science Foundation
of China (Grant No. 11471103).


\end{document}